\renewcommand{\baselinestretch}{2.0}
\newtheorem{conjecture}[theorem]{Conjecture}
\newtheorem{question}{Question}
\begin{document}
\title{Generalizations of Graham's Pebbling Conjecture}
\author{David S. Herscovici\footnote{
Department of Computer Science/IDD,
CL-AC1,
Quinnipiac University,
275 Mount Carmel Avenue,
Hamden, CT 06518,
\texttt{David.Herscovici@quinnipiac.edu}}
\and Benjamin D. Hester\footnote{
Department of Mathematics and Statistics,
Arizona State University,
Tempe, AZ, 85287,
\texttt{benjamin@mathpost.la.asu.edu}}
\and Glenn H. Hurlbert\footnote{
Department of Mathematics and Statistics,
Arizona State University,
Tempe, AZ, 85287,
\texttt{hurlbert@asu.edu}}\\
}
\renewcommand{\baselinestretch}{1.0}
\maketitle
\renewcommand{\baselinestretch}{2.0}

\begin{abstract}
We investigate generalizations of pebbling numbers and of Graham's
pebbling conjecture that $\pi(G \times H) \leq \pi(G) \pi(H)$, where $\pi(G)$
is the pebbling number of the graph $G$.  We develop new machinery to
attack the conjecture, which is now twenty years old.  We show that
certain conjectures imply others that initially appear stronger.  We
also find counterexamples that show that Sj\"ostrand's theorem on
cover pebbling does not apply if we allow the cost of transferring a
pebble from one vertex to an adjacent vertex to depend on the edge and
we describe an alternate pebbling number for which Graham's conjecture
is demonstrably false.
\end{abstract}

\begin{keywords} 
Pebbling, Graham's conjecture
\end{keywords}

\begin{AMS}
05C99
\end{AMS}

\pagestyle{myheadings}
\thispagestyle{plain}
\markboth{D. HERSCOVICI, B. HESTER AND G. HURLBERT}{GENERALIZATIONS OF
  GRAHAM'S PEBBLING CONJECTURE}

\section{Distributions and Pebbling Numbers}
\label{distros}

We investigate various generalizations of Graham's pebbling conjecture and
relationships between those generalizations. \\
\textbf{Definition}: Chung defined a \emph{distribution} of pebbles on
a graph $G = (V, E)$ as a placement of pebbles on the vertices of the
graph.  Equivalently, a distribution $D$ is a function $D : V(G)
\rightarrow \mathbb{N}$, where $D(v)$ represents the number of pebbles
on the vertex $v$.  Also, for every distribution $D$ and every
positive integer $t$, we define $tD$ as the distribution given by
$(tD) (v) = t D(v)$ for every vertex $v$ in $G$.
Following~\cite{Hurlbert}, we also define $|D|$ as the total number of
pebbles in the distribution $D$. \\
\textbf{Definition}: A \emph{pebbling move} consists of removing two
pebbles from some vertex, throwing one of the pebbles away, and moving
the other pebble to an adjacent vertex.

The following definitions are motivated by Section~4 in~\cite{all
cycles}: \\
\textbf{Definition}: Given two distributions $D'$ and $D''$ on a graph
$G$, we say $D''$ \emph{contains} $D'$ if $D'(v) \leq D''(v)$ for
every vertex $v \in V(G)$. \\
\textbf{Definition}: Given two distributions $D$ and $D'$ on a graph $G$,
we say that $D'$ is \emph{reachable} from $D$ if it is possible to use
a sequence of pebbling moves to go from $D$ to a distribution $D''$
that contains $D'$.

We refer to distributions that we are trying to reach as \emph{target
distributions}.  Some authors have called such distributions
\emph{weight functions}~\cite{Crull et. al., Vuong and Wychkoff}, and
speak of \emph{weighted cover pebbling numbers}.  We avoid this
terminology; instead, following~\cite{Elledge and Hurlbert}, we use
the term \emph{weighted graphs} to refer to graphs whose edges are
weighted (see Section~\ref{weighted pebbling}).

We define our most general pebbling number on unweighted graphs as
follows. \\
\textbf{Definition}: Let $\mathcal{S}$ be a set of distributions on a
graph $G$.  Then the \emph{pebbling number of $\mathcal{S}$ in $G$},
denoted $\pi(G, \mathcal{S})$ is the smallest number such that every
distribution $D \in \mathcal{S}$ is reachable from every distribution
that starts with $\pi(G, \mathcal{S})$ (or more) pebbles on $G$.

It is customary to require the graph $G$ to be connected and
undirected, but we may dispense with this requirement and allow
$\pi(G,~\mathcal{S})~=~\infty$ if some distribution in $\mathcal{S}$ is
unreachable from distributions with arbitrarily many pebbles.  In
particular, Moews~\cite{Moews} considered trees to be directed graphs
with all edges directed toward the target vertex.

There are several ways to specialize the above definition. \\
\textbf{Definition}: Let $D$ be a distribution of pebbles on a graph
$G$.  Then the \emph{pebbling number of $D$ in $G$}, denoted $\pi(G,
D)$, is defined by $\pi(G, D) = f (G, \{ D \})$, i.~e.\ the smallest
number such that $D$ is reachable from every distribution that starts
with $\pi(G, D)$ pebbles on $G$.

We define some specific distributions and sets of distributions. \\
\textbf{Definition}: For any vertex $v \in V(G)$, we define the
distribution $\delta_v$ as the function
\[
\delta_v(x) = \left\{
\begin{array}{cl} 1, & x = v \\
                  0, & x \neq v \end{array}
\right.
\]
We also define $\mathcal{S}_t(G) = \{ t \delta_v : v \in V(G)
\}$---the set of distributions with $t$ pebbles on a single vertex.

The definitions of pebbling numbers in the remainder of this section
are consistent with the definitions given by Chung~\cite{Hypercubes}
and the rest of the literature on pebbling, but we give definitions in
terms of the previous definitions. \\
\textbf{Definition}: Choose $v \in V(G)$.  Then the \emph{pebbling number
of $v$ in $G$}, denoted $\pi(G, v)$, is defined by $\pi(G, v) = f (G,
\delta_v)$.  Thus, $\pi(G, v)$ is the smallest number such that the
vertex $v$ can be reached from every distribution of $\pi(G, v)$ pebbles
on $G$. \\
\textbf{Definition}: The \emph{pebbling number of $G$} is defined as
$\pi(G) = f (G, \mathcal{S}_1(G))$.  Thus, $\pi(G)$ is the smallest number
such that any single vertex is reachable from every distribution of
$\pi(G)$ pebbles on $G$. \\
\textbf{Definition}: For any $v \in V(G)$ and any positive integer $t$,
the \emph{$t$-pebbling number of $v$ in $G$}, denoted $\pi_t(G, v)$, is
defined by $\pi_t(G, v) = \pi(G, t \delta_v)$.  Thus, $\pi_t(G, v)$ is the
smallest number such that $t$ pebbles can be moved to the vertex $v$
from every distribution of $\pi_t(G, v)$ pebbles on $G$. \\
\textbf{Definition}: The \emph{$t$-pebbling number of $G$} is defined
as $\pi(G) = f (G, \mathcal{S}_t(G))$.  Thus, $\pi(G)$ is the smallest
number such that $t$ pebbles can be moved to any single vertex from
every distribution of $\pi_t(G)$ pebbles on $G$.

Proposition~\ref{relationships} notes some straightforward
relationships between these definitions.

\begin{proposition}
Let $G$ be any graph, and let $\mathcal{S}$ and $\mathcal{S}'$ be two
sets of distributions on $G$.  Then the various pebbling numbers are
related as follows.

\begin{enumerate}
\item We have ${\displaystyle \pi(G, \mathcal{S}) = \max_{D \in {\cal
S}} \pi(G, D)}$.
\item In particular, we have ${\displaystyle \pi(G) = \max_{v \in V(G)}
\pi(G, v)}$, and ${\displaystyle \pi_t(G) = \max_{v \in V(G)} \pi_t(G, v)}$.
\item Furthermore, if $\mathcal{S} \subseteq \mathcal{S}'$, then $\pi(G,
{\cal S}) \leq \pi(G, \mathcal{S}')$.
\label{subsets}
\end{enumerate}
\label{relationships}
\end{proposition}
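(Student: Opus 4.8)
The plan is to reduce all three parts to a single monotonicity observation. First I would record the \emph{monotonicity lemma}: if a distribution $D_1$ contains $D_2$, and a target distribution $D$ is reachable from $D_2$, then $D$ is reachable from $D_1$. This follows by induction on the length of a sequence of pebbling moves witnessing reachability from $D_2$: a move that removes two pebbles from a vertex and is legal from some distribution is legal from any distribution containing it, and applying the same move to both preserves the containment relation. An immediate corollary is that if $D$ is reachable from every distribution of exactly $k$ pebbles, then it is reachable from every distribution of $k$ \emph{or more} pebbles, because deleting surplus pebbles from a larger distribution yields a $k$-pebble distribution that it contains. This also reconciles the ``or more'' clause in the definition of $\pi(G,\mathcal{S})$ with its absence in the restatement of $\pi(G,D)$.

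For part (1), put $N = \max_{D \in \mathcal{S}} \pi(G,D)$. For the inequality $\pi(G,\mathcal{S}) \ge N$: every distribution with $\pi(G,\mathcal{S})$ pebbles reaches each $D \in \mathcal{S}$, so $\pi(G,D) \le \pi(G,\mathcal{S})$ for every such $D$, and taking the maximum gives $N \le \pi(G,\mathcal{S})$. For $\pi(G,\mathcal{S}) \le N$: given any distribution with $N$ pebbles and any $D \in \mathcal{S}$, we have $N \ge \pi(G,D)$, so the corollary above shows $D$ is reachable; hence $N$ pebbles suffice to reach every member of $\mathcal{S}$, which is exactly the statement $\pi(G,\mathcal{S}) \le N$.

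Part (2) is then immediate by applying part (1) to $\mathcal{S} = \mathcal{S}_1(G) = \{\delta_v : v \in V(G)\}$ and to $\mathcal{S} = \mathcal{S}_t(G) = \{t\delta_v : v \in V(G)\}$, using the definitions $\pi(G,v) = \pi(G,\delta_v)$ and $\pi_t(G,v) = \pi(G,t\delta_v)$. Part (3) follows either from part (1) --- the maximum over a subfamily cannot exceed the maximum over the full family --- or directly from the definition: if every $D \in \mathcal{S}'$ is reachable from every distribution of $\pi(G,\mathcal{S}')$ pebbles, then so is every $D \in \mathcal{S} \subseteq \mathcal{S}'$, whence $\pi(G,\mathcal{S}) \le \pi(G,\mathcal{S}')$.

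The argument is entirely routine; the only step needing any care is the monotonicity lemma, and even there the sole subtlety is the possibility that some pebbling number equals $\infty$ when $G$ is disconnected or directed. With the natural conventions that $\infty \ge k$ for every integer $k$ and that a maximum over a set containing $\infty$ is $\infty$, the lemma and all three parts go through without change.
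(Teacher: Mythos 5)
Your proof is correct. The paper offers no proof at all for this proposition---it is stated as a collection of ``straightforward relationships''---and your argument is exactly the routine one the authors intend the reader to supply: part (1) by two inequalities, parts (2) and (3) as immediate consequences. The only remark worth adding is that your monotonicity lemma is not strictly needed, since the paper's definition of $\pi(G,\mathcal{S})$ already contains the ``(or more)'' clause, which makes the relevant property monotone in the number of pebbles by definition; your lemma simply reconciles this with the restatements that omit that clause, which is a reasonable piece of care.
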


The \emph{cover pebbling number} was first defined by Crull
et.~al.~\cite{Crull et. al.}.  We define it as follows. 

We also define the distribution $\Gamma_G$ as the constant function
$\Gamma_G(x) = 1$ for every vertex $x$ in $V(G)$.\\
\textbf{Definition}: The \emph{cover pebbling number of $G$} is defined
as $\gamma(G) = \pi(G, \Gamma_G)$.  Thus, $\gamma(G)$ is the smallest
number such that one pebble can be moved to every vertex
simultaneously from every distribution of $\gamma(G)$ pebbles on $G$.

Sj\"ostrand~\cite{Sjostrand} proved Theorem~\ref{cover pebbling
number}.

\begin{theorem}[Sj\"ostrand]
If $D$ is a distribution of pebbles on the graph $G$ such that $D(v)
\geq 1$ for every vertex $v$ in $V(G)$, then $\pi(G, D)$ is the smallest
number $n$ with the property that if $n$ pebbles are placed on a
\emph{single} vertex, then $D$ is reachable, regardless of which
vertex contained the initial pebbles.  Thus, we only have to look at
starting distributions in which all pebbles are on the same vertex.
In particular, this allows us to compute $\gamma(G)$ easily.
\label{cover pebbling number}
\end{theorem}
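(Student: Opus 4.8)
The plan is to prove the two inequalities separately. One direction is immediate: for each vertex $r$, the single-vertex placement of $m$ pebbles at $r$ is one distribution of size $m$, so if $D$ is not reachable from $m\,\delta_r$ then $D$ is not reachable from every distribution of $m$ pebbles; hence $\pi(G,D)$ is at least the ``stack number'' $N$, defined as the least $n$ for which $D$ is reachable from $n\,\delta_r$ for \emph{every} $r$. The content of the theorem is the reverse inequality $\pi(G,D)\le N$, i.e.\ that $D$ is reachable from every distribution $C$ with $|C|\ge N$. I would argue by contradiction: suppose some ``bad'' distribution $C$ exists, meaning $|C|=N$ and $D$ is not reachable from $C$. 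By the definition of $N$ no bad distribution is a single stack, so any bad $C$ is supported on at least two vertices.

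The plan is then a concentration (smoothing) argument. Among all bad distributions of size $N$ I would choose one, $C$, maximizing a strictly convex symmetric potential of the pile sizes -- for instance $\Phi(C)=\sum_{v}C(v)^2$ -- possibly combined with a ``centering'' term like $\sum_v C(v)\,2^{d(r_0,v)}$ ($d$ the graph distance, $r_0$ a well-chosen fixed vertex) to dictate in which direction to push. Such a $\Phi$ is maximized over nonnegative integer vectors of a fixed sum only at a single stack, so it is enough to show that every bad distribution supported on $\ge 2$ vertices can be replaced by a bad distribution of the same size with strictly larger $\Phi$: this contradicts maximality, forcing the maximizer to be a stack $N\,\delta_r$ and so contradicting the definition of $N$. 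The replacement would redistribute pebbles so as to concentrate the distribution -- roughly, pushing the contents of peripheral piles toward more central vertices along shortest paths.

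The crux, and where I expect the real difficulty, is showing that such a redistribution preserves badness. Sliding a single pebble across an edge will not do: on the path $P_4$ with vertices $a,b,c,d$ in order and cover target $\Gamma_{P_4}$, the stack $14\,\delta_a$ does not cover $P_4$, yet moving one of its pebbles to $b$ yields a distribution that does -- so ``free'' single-pebble moves do not preserve non-reachability, not even for $D=\Gamma_G$. The redistribution must instead shift pebbles in coordinated groups and use essentially that $D(v)\ge 1$ for \emph{every} vertex: the hope is that a winning sequence from the modified distribution can be pulled back to a winning sequence from $C$, using the nonzero demand at the vertex the pebbles were pushed through as a buffer that absorbs the discrepancy created by the shift. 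Pinning down exactly which pebbles to move and verifying this pull-back is the heart of the matter, and it is precisely this step that collapses when some $D(v)=0$ or when the cost of crossing an edge is allowed to vary from edge to edge, consistent with the counterexamples promised in the abstract. As a guide, the case where $G$ is a tree is much easier -- one inducts by peeling a leaf, and the extremal bad distribution is seen directly to be a stack -- and this tree argument is the model I would try to transport to the general case; an alternative worth trying is to encode reachability of a target $D\ge 1$ as a transportation-type (totally unimodular) linear program and read off the stack optimum by duality, in the spirit of the weighted cover pebbling results of~\cite{Crull et. al., Vuong and Wychkoff}.
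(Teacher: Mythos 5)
The paper itself offers no proof of this statement; it is imported verbatim from \cite{Sjostrand}, so the only question is whether your argument stands on its own. It does not yet. The reduction to the inequality $\pi(G,D)\le N$, where $N$ is your stack number, is correct and the easy direction is fine, but everything then rests on the ``concentration lemma'': that a bad distribution of size $N$ supported on at least two vertices can be replaced by a bad distribution of the same size with strictly larger potential $\Phi$. You leave this lemma entirely unproved, and it is not a deferred verification --- it is the whole theorem in disguise, since iterating it is exactly what turns an arbitrary bad distribution into a bad stack. Your own $P_4$ computation correctly shows that the obvious move (slide one pebble) fails to preserve the relevant properties, and the proposed repair --- shift pebbles ``in coordinated groups'' and ``pull back'' a winning sequence using the demand $D(v)\ge 1$ as a buffer --- is named but never constructed: you do not say which pebbles move, to where, in what multiples, or how a winning sequence from the concentrated configuration is simulated from the original one. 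Until that step is written down and verified, there is no proof.

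For comparison, the published arguments (Sj\"ostrand's, and independently Vuong--Wyckoff's) do not smooth the initial configuration at all. They show directly that if $D$ is unreachable from $C$, then there is a particular deficient vertex $r$ --- located by playing a maximal sequence of pebbling moves from $C$ and examining the stuck configuration --- for which $|C| < \sum_{u} D(u)\,2^{d(u,r)}$; the right-hand side is exactly the number of pebbles a stack at $r$ requires to reach $D$, whence $|C| < N$. So the known proof bounds $|C|$ by a distance-weighted count attached to a cleverly chosen target vertex rather than transforming $C$ into a stack; if you want to complete a write-up, that is the mechanism to aim for. Your fallback suggestions are also shaky: the tree case by leaf-peeling is plausible but not transported to general graphs, and a totally unimodular transportation LP is unlikely to encode reachability exactly, because a pebbling move destroys a pebble per edge crossed and fractional relaxations of pebbling are known to behave differently from the integral problem.
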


\section{Cartesian products}
\label{products}

\textbf{Definition}: If $G = (V_{G}, E_{G})$ and $H = (V_{H}, E_{H})$
are two graphs, their Cartesian product is the graph $G\times H$ whose
vertex set is the product
\[
V_{G\times H} = V_{G} \times V_{H}=\{(x, y) : x \in V_{G}, y \in
V_{H}\},
\]
and whose edges are given by
\[
E_{G\times H} = \{((x, y), (x, y')) : (y, y') \in E_{H} \} \cup \{((x,
y), (x', y)) : (x, x') \in E_{G} \}.
\]

We first define the product of two distributions.  This definition
appeared with slightly different notation in~\cite{all cycles}. \\
\textbf{Definition}: If $D_g$ and $D_h$ are distributions on $G$ and
$H$ respectively, then we define $D_g \cdot D_h$ as the distribution
on $G \times H$ such that
\[ (D_g \cdot D_h) ((x, y)) = D_g (x) D_h (y) \]
for every vertex $(x, y) \in V(G \times H)$.  Similarly, if
$\mathcal{S}_G$ and $\mathcal{S}_H$ are sets of distributions on $G$
and $H$ respectively, then $\mathcal{S}_G \cdot \mathcal{S}_H$ is the
set of distributions on $G \times H$ given by
\[
\mathcal{S}_G \cdot \mathcal{S}_H = \{ D_g \cdot D_h : D_g \in
\mathcal{S}_G \mbox{ and } D_h \in \mathcal{S}_H \}
\]
The following conjectures generalize Graham's Conjecture
(Conjecture~\ref{Graham's conjecture}).

\begin{conjecture}
For all graphs $G$ and $H$, and all sets of distributions
$\mathcal{S}_G$ and $\mathcal{S}_H$ on $G$ and $H$ respectively, we
have $\pi(G \times H, \mathcal{S}_G \cdot \mathcal{S}_H) \leq \pi(G,
\mathcal{S}_G) \pi(H, \mathcal{S}_H)$.
\label{sets of distributions}
\end{conjecture}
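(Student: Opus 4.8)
The plan is to peel the statement down, by a sequence of easy reductions, to a hard core that turns out to be Graham's Conjecture itself. First, by the first part of Proposition~\ref{relationships} applied on the graph $G \times H$,
\[
\pi(G \times H, \mathcal{S}_G \cdot \mathcal{S}_H) = \max \{\, \pi(G \times H, D_g \cdot D_h) : D_g \in \mathcal{S}_G,\ D_h \in \mathcal{S}_H \,\},
\]
while the same proposition gives $\pi(G, \mathcal{S}_G) = \max_{D_g \in \mathcal{S}_G} \pi(G, D_g)$ and $\pi(H, \mathcal{S}_H) = \max_{D_h \in \mathcal{S}_H} \pi(H, D_h)$, so their product is $\max_{D_g, D_h} \pi(G, D_g)\,\pi(H, D_h)$. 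Hence it suffices (indeed, is equivalent) to prove the single-target inequality $\pi(G \times H, D_g \cdot D_h) \le \pi(G, D_g)\,\pi(H, D_h)$ for every distribution $D_g$ on $G$ and every distribution $D_h$ on $H$. So fix $D_g$ and $D_h$, put $a = \pi(G, D_g)$ and $b = \pi(H, D_h)$, let $D$ be an arbitrary distribution on $G \times H$ with $|D| \ge ab$, and aim to reach $D_g \cdot D_h$ from $D$.

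The structural idea is to regard $G \times H$ as $|V(H)|$ disjoint copies of $G$, the \emph{$G$-fibers} $G_y = G \times \{y\}$, joined by $H$-edges between corresponding vertices. A complete copy of $D_g$ sitting in a fiber should act like one pebble on the corresponding vertex of $H$: if $(y, y') \in E(H)$, then from the distribution $2D_g$ on $G_y$ one reaches $D_g$ on $G_{y'}$ by making, for each $x \in V(G)$, exactly $D_g(x)$ pebbling moves along the edge $((x, y), (x, y'))$. Iterating, every pebbling sequence in $H$ taking a distribution $M$ to one containing $D_h$ lifts to a pebbling sequence in $G \times H$ taking the configuration ``$M(y)$ copies of $D_g$ in $G_y$, for each $y$'' to one containing $D_g \cdot D_h$ (leftover pebbles in the fibers are harmless and ignored). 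Since $\pi(H, D_h) = b$, it therefore suffices to use pebbling moves on $D$ to reach a configuration in which, for every $y$, the fiber $G_y$ contains at least $m_y$ complete copies of $D_g$, with $\sum_{y \in V(H)} m_y \ge b$.

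Now let $q_y = \sum_{x \in V(G)} D(x, y)$, so $\sum_y q_y = |D| \ge ab$. Working entirely inside $G_y$ (a copy of $G$), one can produce $m_y$ copies of $D_g$ whenever $q_y \ge \pi(G, m_y D_g)$; take $m_y$ maximal with this property. If one only knew the naive bound $\pi(G, t D_g) \le t\,\pi(G, D_g) = ta$, this would give $m_y \ge \lfloor q_y/a \rfloor$ and hence $\sum_y m_y \ge (\sum_y q_y)/a - |V(H)| \ge b - |V(H)|$, which falls short of $b$. This shortfall is not an artifact of the bookkeeping: specializing to $\mathcal{S}_G = \mathcal{S}_1(G)$ and $\mathcal{S}_H = \mathcal{S}_1(H)$, where $\mathcal{S}_G \cdot \mathcal{S}_H = \mathcal{S}_1(G \times H)$ since $\delta_v \cdot \delta_w = \delta_{(v,w)}$, the statement becomes exactly $\pi(G \times H) \le \pi(G)\,\pi(H)$, Graham's Conjecture~\ref{Graham's conjecture}. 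So an unconditional proof along these lines cannot be expected, and this counting step is precisely where the difficulty concentrates.

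To do better one must avoid the wasteful rounding, for instance by (i) transferring the pebbles of ``poor'' fibers ($q_y < a$) into richer fibers before solving, paying the unavoidable factor of $2$ per $H$-edge and absorbing it in a global accounting, or (ii) replacing the all-or-nothing ``$m_y$ copies'' target by a weight/potential function that credits partial progress inside a fiber and propagates it across $H$, in the style of the collapsing and weight-function arguments that settle the known special cases. Either way, the genuinely hard step is to control the $t$-fold pebbling numbers $\pi(G, t D_g)$ and the redistribution of leftover pebbles among the $G$-fibers simultaneously. One clean sub-case where this is manageable is when $D_g$ and $D_h$ (hence $D_g \cdot D_h$) have full support: Sj\"ostrand's Theorem~\ref{cover pebbling number} then reduces all three starting distributions to single-pile ones, making the counting above far more tractable, consistent with the inequality being known when $D_g$ and $D_h$ are the cover-pebbling targets $\Gamma_G$ and $\Gamma_H$. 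For the general conjecture, however, the realistic outcome of this approach is not a proof of Conjecture~\ref{sets of distributions} itself but a reduction: showing it is implied by, and in fact equivalent to, the much more restricted $t$-pebbling form of Graham's Conjecture for the distributions $t\delta_v$.
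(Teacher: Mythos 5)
You have not proved the statement, but that is not a defect of your argument: the statement is Conjecture~\ref{sets of distributions}, an open generalization of Graham's Conjecture, and the paper offers no proof of it. Your writeup is honest about this, and the parts of it that are actual mathematics are correct and coincide with what the paper does prove \emph{about} the conjecture. Your opening reduction --- using Proposition~\ref{relationships} to replace $\pi(G\times H,\mathcal{S}_G\cdot\mathcal{S}_H)$ by the maximum of $\pi(G\times H,D_g\cdot D_h)$ over $D_g\in\mathcal{S}_G$, $D_h\in\mathcal{S}_H$, and likewise factoring $\pi(G,\mathcal{S}_G)\pi(H,\mathcal{S}_H)$ as a product of maxima --- is exactly the content of the paper's Proposition~\ref{single distributions implies all sets}, which shows Conjecture~\ref{sets of distributions} is equivalent to the single-distribution Conjecture~\ref{single distributions}. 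Your observation that taking $\mathcal{S}_G=\mathcal{S}_1(G)$ and $\mathcal{S}_H=\mathcal{S}_1(H)$ recovers $\pi(G\times H)\le\pi(G)\pi(H)$ matches the paper's framing of Conjecture~\ref{Graham's conjecture} as a specialization, and your remark about full-support targets is Sj\"ostrand's Theorem~\ref{cover product}. So the correct conclusion, which you essentially reach, is that no proof is possible at present; the remainder of your proposal (the fiber decomposition, the counting of copies of $D_g$ per fiber, and the admission that the rounding loss of $|V(H)|$ cannot be repaired by naive bookkeeping) is a reasonable sketch of why the single-distribution core is hard, not a gap you overlooked. If this were submitted as a proof it would have to be rejected at the point where you concede the counting step fails; as an analysis of the conjecture's status it agrees with the paper.
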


By choosing specific sets of distributions $\mathcal{S}_G$ and
$\mathcal{S}_H$, Conjecture~\ref{sets of distributions} generates
several more conjectures.  Conjecture~\ref{single distributions} first
appeared as Conjecture~4.1 in~\cite{all cycles}.

\begin{conjecture}
For all graphs $G$ and $H$, and all distributions $D_g$ and $D_h$ on
$G$ and $H$ respectively, we have $\pi(G \times H, D_g \cdot D_h) \leq
\pi(G, D_g) \pi(H, D_h)$.
\label{single distributions}
\end{conjecture}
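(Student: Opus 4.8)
Because Graham's conjecture $\pi(G\times H)\le\pi(G)\pi(H)$ is the special case $D_g=\delta_a$, $D_h=\delta_b$ of this statement — one maximizes over $a\in V(G)$ and $b\in V(H)$, using $\delta_{(a,b)}=\delta_a\cdot\delta_b$ and Proposition~\ref{relationships} — a complete proof is presumably out of reach; what follows is the natural fiber-decomposition attack, the point at which it stalls, and what I expect the general formulation actually buys.

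The plan is to fix a distribution $D$ on $G\times H$ with $|D|=\pi(G,D_g)\,\pi(H,D_h)$ and show that $D_g\cdot D_h$ is reachable from $D$. View $G\times H$ as the union of its $G$-fibers $G_y:=G\times\{y\}$, $y\in V(H)$, each an isomorphic copy of $G$, and let $D^{(y)}$ be the restriction of $D$ to $G_y$; call $G_y$ \emph{rich} if $|D^{(y)}|\ge\pi(G,D_g)$. Phase~1: inside each rich fiber, using only the edges of that copy of $G$, the definition of $\pi(G,D_g)$ lets us reach a copy of $D_g$ there, i.e.\ put at least $D_g(x)$ pebbles on $(x,y)$ for every $x\in V(G)$. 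Phase~2: fix $x$ in the support of $D_g$ and look at the $H$-fiber $H_x:=\{x\}\times H$; Phase~1 has deposited $D_g(x)$ pebbles on $(x,y)$ for every rich $y$, and moving along the edges of $H$ we would like to reach a copy of $D_g(x)\cdot D_h$ on $H_x$, which asks that the rich coordinates carry at least $\pi(H,D_g(x)\,D_h)$ pebbles worth of usable material. A pigeonhole count bounds the number of rich fibers by $|D|/\pi(G,D_g)=\pi(H,D_h)$, and if one could guarantee that enough of the mass sits in rich fibers — automatic when no fiber is poor — the two phases would compose to give the bound.

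The hard part, which is essentially the entire content of the long-open Graham conjecture, is the accounting for \emph{poor} fibers: a fiber with fewer than $\pi(G,D_g)$ pebbles contributes nothing in Phase~1, yet its pebbles may not be discarded, and there is no single monotone potential that simultaneously tracks ``progress toward $D_g$ within each $G$-fiber'' and ``progress toward $D_h$ along each $H$-fiber.'' Every known Graham-type bound sidesteps this by exploiting structure of $H$: induction along a leaf or a cut vertex when $H$ is a tree, so that poor-fiber pebbles can be pushed (at half cost) toward the target; the no-cycle / weight-function smoothing of Moews~\cite{Moews} and of~\cite{all cycles} when $H$ is a cycle; a direct count when $H$ is complete — and none survives for general $H$. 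I therefore expect the payoff of the distribution formulation is not a proof but (i) the observation, immediate from Proposition~\ref{relationships}, that Conjecture~\ref{single distributions} is equivalent to the apparently stronger Conjecture~\ref{sets of distributions} — since $\pi(G\times H,\mathcal S_G\cdot\mathcal S_H)=\max\{\pi(G\times H,D_g\cdot D_h):D_g\in\mathcal S_G,\ D_h\in\mathcal S_H\}$ while $\pi(G,\mathcal S_G)\,\pi(H,\mathcal S_H)=\bigl(\max_{D_g\in\mathcal S_G}\pi(G,D_g)\bigr)\bigl(\max_{D_h\in\mathcal S_H}\pi(H,D_h)\bigr)$ — and (ii) the new machinery flagged in the abstract, designed to replace the two incompatible potentials by one.
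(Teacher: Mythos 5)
You have correctly read the situation: the statement is Conjecture~\ref{single distributions}, the paper offers no proof of it, and no proof is known --- it strictly generalizes Graham's conjecture, which is obtained from it via $D_g=\delta_x$, $D_h=\delta_y$ followed by maximizing over $x$ and $y$ exactly as in Propositions~\ref{all vertices implies graph} and~\ref{relationships}. Your item~(i) is precisely the paper's Proposition~\ref{single distributions implies all sets}, proved the same way you sketch it, so on the one thing the paper actually establishes about this conjecture you and the paper agree. Two remarks on where your expectations diverge from what the paper does. First, the paper does record a nontrivial proved case of Conjecture~\ref{single distributions}: Theorem~\ref{cover product}, due to Sj\"ostrand, gives the bound whenever $D_g$ and $D_h$ are both everywhere positive, as a consequence of the cover-pebbling theorem (Theorem~\ref{cover pebbling number}); your survey of known partial results (trees, cycles, complete graphs) omits this one, which is the partial result most relevant to the distribution formulation. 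Second, the ``new machinery'' of the paper is not a replacement for the two incompatible fiber potentials you describe; it is the vertex-splitting construction $G_i'$ (append a pendant vertex $x'$ at $x_i$, so that $\pi_{2s}(G,x_i)=\pi_s(G_i',x')$), which yields Theorem~\ref{doubling} and hence the reduction of Conjecture~\ref{st-pebbling vertices} to odd $s,t$, together with the weighted-graph variant in which the $st$-target and single-target conjectures become genuinely equivalent (Theorem~\ref{weighted equivalence}). Your fiber-decomposition discussion is a reasonable account of why the conjecture is hard, but it is not the direction the paper pursues.
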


In particular, Sj\"ostrand~\cite{Sjostrand} proved Theorem~\ref{cover
product} as a consequence of Theorem~\ref{cover pebbling number}.

\begin{theorem}
Let $D_g$ be a distribution on the graph $G$ such that $D_g(v) \geq 1$
for every vertex $v$ in $V(G)$, and let $D_h$ be a distribution on $H$
with the same property.  Then $\pi(G \times H, D_g \cdot D_h) \leq \pi(G,
D_g) \pi(H, D_h)$.
\label{cover product}
\end{theorem}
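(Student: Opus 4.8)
The plan is to deduce this from Sj\"ostrand's theorem (Theorem~\ref{cover pebbling number}) by first reducing to stacked starting configurations and then moving pebbles in two stages: once across a copy of $G$, and then down copies of $H$. Write $n = \pi(G, D_g)$ and $m = \pi(H, D_h)$; if either is infinite the inequality is vacuous, so assume both are finite. The first observation is that $(D_g \cdot D_h)((x,y)) = D_g(x) D_h(y) \geq 1$ for every $(x,y) \in V(G \times H)$, so the hypothesis of Theorem~\ref{cover pebbling number} is satisfied by the target $D_g \cdot D_h$ on $G \times H$. Hence $\pi(G \times H, D_g \cdot D_h)$ equals the least $N$ such that $D_g \cdot D_h$ is reachable from $N\delta_{(a,b)}$ for \emph{every} vertex $(a,b)$ of $G \times H$. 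Thus it suffices to show that from $nm$ pebbles stacked on an arbitrary vertex $(a,b)$, the distribution $D_g \cdot D_h$ is reachable.

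The one elementary lemma I would isolate is that reachability scales: if a distribution $D$ is reachable from a distribution $E$ on a graph $K$, then $kD$ is reachable from $kE$ for every positive integer $k$. This follows from additivity of reachability --- if $D_i$ is reachable from $E_i$ for $i = 1,2$, then $D_1 + D_2$ is reachable from $E_1 + E_2$, since the move sequence witnessing the first reachability remains legal in the presence of the extra static pebbles $E_2$, and then likewise for the second --- applied to $k$ identical copies.

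Now I would carry out the two-stage argument. \emph{Stage one:} inside the induced subgraph $G \times \{b\}$, which is isomorphic to $G$, the definition of $n = \pi(G, D_g)$ gives that $D_g$ is reachable from $n\delta_a$, so by the scaling lemma $m D_g$ is reachable from $nm\,\delta_a$. Performing the corresponding moves inside $G \times \{b\}$ --- these are legal moves of $G \times H$, and we have exactly the $nm$ pebbles on $(a,b)$ that they require --- brings us to a configuration with at least $m D_g(x)$ pebbles on $(x,b)$ for every $x \in V(G)$. \emph{Stage two:} fix $x \in V(G)$ and work inside the induced subgraph $\{x\} \times H \cong H$. Since $m = \pi(H, D_h)$, $D_h$ is reachable from $m\delta_b$, so by scaling $D_g(x) D_h$ is reachable from $(D_g(x)\,m)\,\delta_b$; as we have at least $m D_g(x)$ pebbles on $(x,b)$, we can reach a configuration on $\{x\} \times H$ with at least $D_g(x) D_h(y)$ pebbles on $(x,y)$ for every $y \in V(H)$. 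The subgraphs $\{x\} \times H$ are pairwise vertex-disjoint and their union is all of $V(G \times H)$, so running stage two in each of them in turn yields a configuration that contains $D_g \cdot D_h$. Together with the first paragraph this gives $\pi(G \times H, D_g \cdot D_h) \leq nm = \pi(G, D_g)\,\pi(H, D_h)$.

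Given Theorem~\ref{cover pebbling number} the argument is essentially routine; the two points that need genuine care are verifying that $D_g \cdot D_h$ inherits the property $D(v) \geq 1$ so that Sj\"ostrand's theorem is actually applicable to the product $G \times H$, and pinning down the scaling lemma. All the real content is in Theorem~\ref{cover pebbling number}, which is exactly what permits the reduction to starting configurations stacked on a single vertex --- without it one would have to handle arbitrary starting distributions on $G \times H$ directly, which is the difficulty at the heart of Graham's conjecture itself.
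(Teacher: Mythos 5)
Your proof is correct, and it follows exactly the route the paper indicates: the paper gives no proof of Theorem~\ref{cover product}, merely noting that Sj\"ostrand derived it as a consequence of Theorem~\ref{cover pebbling number}, and your argument --- check that $D_g \cdot D_h$ is everywhere positive so the reduction to single-vertex starts applies, then spread $nm$ stacked pebbles across $G \times \{b\}$ and then down each disjoint copy $\{x\} \times H$, using the scaling/additivity lemma --- is the standard way to fill in that derivation. The two points you flag as needing care (positivity of the product target, and the scaling lemma with $k = D_g(x) \geq 1$) are indeed the only points of substance, and you handle both correctly.
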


For positive integers $s$ and $t$, and vertices $x \in V(G)$ and $y
\in V(H)$, we let $D_g = s \delta_x$ and $D_h = t \delta_y$ in
Conjecture~\ref{single distributions} to obtain
Conjecture~\ref{st-pebbling vertices}.

\begin{conjecture} 
For all graphs $G$ and $H$, all positive integers $s$ and $t$, and all
vertices $x \in V(G)$ and $y \in V(H)$, we have $\pi_{st}(G \times H,
(x, y)) \leq \pi_s(G, x) \pi_t(H, y)$.
\label{st-pebbling vertices}
\end{conjecture}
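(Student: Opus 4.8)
The plan is to attack the statement directly, by decomposing an arbitrary distribution on $G \times H$ along its two orthogonal families of fibers; but one should be honest that the case $s = t = 1$ specializes, after maximizing over $x$ and $y$, to Graham's conjecture (Conjecture~\ref{Graham's conjecture}), so no elementary argument can close the general case and what follows is a strategy whose completion is the real content. First I would record the cheap reduction: since $(st)\,\delta_{(x,y)} = (s\delta_x)\cdot(t\delta_y)$, Conjecture~\ref{st-pebbling vertices} is exactly the instance of Conjecture~\ref{single distributions} with $D_g = s\delta_x$ and $D_h = t\delta_y$, so it would suffice to prove the single-distribution conjecture for targets supported on one vertex. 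This is a genuine simplification because such targets are controlled by the classical $t$-pebbling numbers, for which closed forms and explicit pebbling strategies are already known for trees, cycles, and hypercubes; note too that, unlike in Theorem~\ref{cover product}, the target $s\delta_x$ vanishes off $x$, so Sj\"ostrand's reduction does not apply and the worst source distributions need not be concentrated on one vertex.

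For the main argument I would fix a distribution $D$ on $G \times H$ with $|D| = \pi_s(G, x)\,\pi_t(H, y)$ and regard $G \times H$ as the disjoint copies $H_{x'} = \{x'\} \times H \cong H$, one for each $x' \in V(G)$, joined in the $G$-direction. For each $x'$ let $c(x')$ be the largest number of pebbles that can be moved to the vertex $(x', y)$ using only pebbling moves inside $H_{x'}$; this greedy quantity is monotone in the number of pebbles on $H_{x'}$ and satisfies $c(x') \geq t$ whenever $H_{x'}$ carries at least $\pi_t(H, y)$ pebbles. Performing these moves converts $D$ into a distribution $D'$ on the copy $G \times \{y\} \cong G$ with $D'(x') = c(x')$, and it then suffices to move $st$ pebbles to $x$ inside $G$ starting from $D'$. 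What one hopes to extract from the product form of $|D|$ is a dichotomy: either enough of the fibers $H_{x'}$ are ``full,'' so that $D'$ dominates $t$ disjoint copies of a distribution on $G$ of total size at least $\pi_s(G, x)$, or pebbles pile up within individual fibers by enough to make up the shortfall.

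The hard part is exactly the obstruction that has kept Graham's conjecture open for twenty years: an adversary may put $\pi_t(H, y) - 1$ pebbles on every fiber, spread so that each fiber forwards almost nothing toward its vertex $(x', y)$, while at the same time arranging that the induced distribution $D'$ on $G$ is near-worst for moving $st$ pebbles to $x$. Then collapsing $H$ first and collapsing $G$ first both fail, and one must let the two phases interact --- collapse $H$ partway in some columns, collapse $G$ partway, route pebbles along ``diagonal'' paths --- which is precisely where a genuinely two-dimensional device is needed. The natural candidate is to combine a weight-function lower-bound certificate for $(G, s\delta_x)$ with one for $(H, t\delta_y)$ into a product certificate for $(G \times H, (st)\delta_{(x,y)})$ and prove that its threshold equals $\pi_s(G, x)\,\pi_t(H, y)$; the catch is that weight functions certify only necessity, and the matching sufficiency requires an explicit strategy realizing the product bound, which is available only when one factor has enough structure --- a tree, a cycle, a product of these --- for its own strategy to compose with the fiber strategy above. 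I would therefore settle those tractable cases first, and otherwise treat the full conjecture as reduced, via the fiber strategy, to a quantitative bound on how badly a product distribution on $G \times H$ can be made to ``fail'' in both coordinates simultaneously.
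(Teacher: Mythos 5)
There is no proof to compare against: the statement you were given is Conjecture~\ref{st-pebbling vertices}, an open generalization of Graham's conjecture, and the paper does not prove it. Your proposal does not prove it either, and to your credit you say so explicitly. Two specific problems with the sketch as written. First, the ``cheap reduction'' is not a reduction: the instance of Conjecture~\ref{single distributions} with $D_g = s\delta_x$ and $D_h = t\delta_y$ \emph{is} Conjecture~\ref{st-pebbling vertices} verbatim, so ``it suffices to prove the single-distribution conjecture for targets supported on one vertex'' is a restatement, not a simplification; and going the other direction is a strengthening, since the paper's Proposition~\ref{single distributions implies all sets} shows Conjecture~\ref{single distributions} is equivalent to the fully general Conjecture~\ref{sets of distributions}. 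Second, the fiber-collapse argument (collapse each copy $\{x'\}\times H$ onto $(x',y)$, then pebble in $G$) is the standard first attack on Graham's conjecture, and you correctly identify the adversarial distribution --- $\pi_t(H,y)-1$ pebbles per fiber, each forwarding nothing --- that defeats it. Naming the obstruction is not the same as overcoming it, so the ``dichotomy'' in your second paragraph remains unestablished and the argument has a genuine, acknowledged gap at exactly the point where all known approaches fail.

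What the paper actually does with this conjecture is orthogonal to your sketch and worth absorbing: rather than attacking it directly, it proves reductions among its instances. The pendant-vertex construction $G_i'$ (attach $x'$ to $x_i$ by one edge) satisfies $\pi_{2s}(G,x_i)=\pi_s(G_i',x')$ (Proposition~\ref{2s-pebbling xi in G}), and Theorem~\ref{doubling} uses it to show that validity of Conjecture~\ref{st-pebbling vertices} for a given $(s,t)$ over all graphs implies it for $(2s,t)$ and $(s,2t)$; hence the conjecture is equivalent to its odd-$s,t$ case (Theorem~\ref{two equivalences}), and in the weighted-graph setting, where the pendant edge can be given weight $s$, the $st$-version collapses entirely to the $s=t=1$ case (Theorem~\ref{weighted equivalence}). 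If you want your fiber strategy to produce partial results, that pendant-edge device is the machinery the paper offers for trading the parameters $s,t$ against graph structure.
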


Letting $s = t = 1$ in Conjecture~\ref{st-pebbling vertices}, we can
specialize to Conjecture~\ref{pebbling vertices}, which first appeared
in~\cite{C5xC5}.

\begin{conjecture}
For all graphs $G$ and $H$ and all vertices $x \in V(G)$ and $y \in
V(H)$, we have $\pi(G \times H, (x, y)) \leq \pi(G, x) \pi(H, y)$.
\label{pebbling vertices}
\end{conjecture}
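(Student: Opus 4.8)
The plan is to attack Conjecture~\ref{pebbling vertices} by the fiber/weight-function method descending from Moews~\cite{Moews} and Chung~\cite{Hypercubes}, while being honest that a complete proof would resolve Graham's Conjecture (Conjecture~\ref{Graham's conjecture})—so the real content is the shape of a would-be proof and the precise point at which it runs aground. Fix the target $(x,y)$, put $p=\pi(G,x)$ and $q=\pi(H,y)$, and let $D$ be any distribution on $G\times H$ with $|D|\ge pq$; the goal is to make $(x,y)$ reachable from $D$. Write $G_{y'}=G\times\{y'\}$ for the copy of $G$ indexed by $y'\in V(H)$ and $H_{x'}=\{x'\}\times H$ for the copy of $H$ indexed by $x'\in V(G)$; every edge of $G\times H$ lies in exactly one such copy, the target lies in $H_x$, and each vertex $(x,y')$ lies in both $G_{y'}$ and $H_x$. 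The scheme has two stages. Stage~1: for each $y'$, collapse the pebbles of $D$ lying in $G_{y'}$ toward the vertex $(x,y')\in H_x$, delivering $t_{y'}\ge 0$ pebbles there. Stage~2: the numbers $(t_{y'})_{y'\in V(H)}$ form a distribution $E$ on $H_x\cong H$, and if $|E|=\sum_{y'}t_{y'}\ge q$ then $\pi(H,y)=q$ guarantees $(x,y)$ is reachable from $E$ within $H_x$, hence from $D$.

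Stage~1 delivers $t_{y'}$ pebbles to $(x,y')$ whenever $|D_{y'}|\ge\pi_{t_{y'}}(G,x)$, where $D_{y'}$ is the part of $D$ in $G_{y'}$, so the argument reduces to a transport question: can one choose the $t_{y'}$, possibly after first shifting some of $D$'s pebbles between copies of $G$ using moves inside the copies $H_{x'}$, so that $\sum_{y'}t_{y'}\ge q$? Here two competing losses must be controlled. Summing the inequalities $\pi_{t_{y'}}(G,x)\le|D_{y'}|$ over the nonempty copies of $G$ wastes up to $\pi(G,x)-1$ pebbles per copy, which a crude count cannot afford; on the other hand, the consolidating moves that would reduce the number of copies in play themselves destroy pebbles at the usual halving rate. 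Balancing these against whatever slack exists in the $t$-pebbling numbers $\pi_t(G,x)$—ideally through a Moews-style weight function together with a ``no-cycle'' normalization of the move sequence—is the heart of the matter, and no such accounting is known to work for an arbitrary second factor $H$. This is also where the inequality $\pi_t(G,x)\le t\,\pi(G,x)$ enters: it is needed as an input to close Stage~2, it is itself exactly the case $H=K_1$, $s=1$ of Conjecture~\ref{st-pebbling vertices}, and it is open in general (though known for trees~\cite{Moews} and hypercubes~\cite{Hypercubes}).

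I expect the consolidation step to be the main obstacle, and the contrast with Theorem~\ref{cover product} explains why: there Sj\"ostrand's Theorem~\ref{cover pebbling number} reduces cover pebbling to starting distributions concentrated on a single vertex, which removes the consolidation problem altogether, whereas no comparable reduction is available for the pebbling number $\pi(G\times H,(x,y))$. The concrete programme is therefore (i) to formalize Stages~1--2, reducing Conjecture~\ref{pebbling vertices} to a transport lemma on $G\times H$; (ii) to derive that lemma from a bound on $\pi_t(G,x)$ together with a consolidation estimate for $H$; and (iii) to supply those two ingredients. Step~(iii) can be carried out when $G$ or $H$ is a path or a tree, recovering the theorems of Moews and Chung as a consistency check on the machinery, but it remains unproved in general—which is precisely why the statement appears here as a conjecture rather than a theorem.
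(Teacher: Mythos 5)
The statement you are addressing is a conjecture, and the paper contains no proof of it: Conjecture~\ref{pebbling vertices} is open, and by Proposition~\ref{all vertices implies graph} it implies Graham's Conjecture~\ref{Graham's conjecture}, so any complete proof would be a major result. What the paper actually does with this statement is not attack it directly but relate it to other conjectures --- most notably Theorem~\ref{two equivalences}, which shows it is equivalent to the superficially stronger Conjecture~\ref{powers of two} via the doubling construction $G_i'$. Your submission is likewise not a proof but a programme, and you say so; judged as a proof attempt it therefore has a genuine (and self-identified) gap. Concretely: in your Stage~1/Stage~2 scheme, summing $\pi_{t_{y'}}(G,x)\le |D_{y'}|$ over the nonempty fibers $G_{y'}$ can waste up to $\pi(G,x)-1$ pebbles per fiber, and when $H$ has many vertices this loss overwhelms the budget $pq$; the ``consolidation estimate'' you invoke to repair this is exactly the missing ingredient, and nothing in your write-up supplies it. So the argument does not close, which is consistent with the statement's status in the paper.

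One factual correction: you assert that $\pi_t(G,x)\le t\,\pi(G,x)$ ``is open in general.'' It is not. Given any distribution of $t\,\pi(G,x)$ pebbles, partition the pebbles arbitrarily into $t$ blocks of $\pi(G,x)$ pebbles each; each block, viewed as a distribution in its own right, can deliver one pebble to $x$, and since the blocks are disjoint these $t$ move sequences can be executed independently. Hence $\pi_t(G,x)\le t\,\pi(G,x)$ always, even though it is formally the case $H=K_1$, $s=1$ of Conjecture~\ref{st-pebbling vertices}. The genuinely hard content of the $t$-pebbling conjectures lies elsewhere (in beating this trivial linear bound in the way the product inequality requires). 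Separately, if you want your programme to align with the paper's machinery, note that the paper's reduction in Theorem~\ref{doubling} and Theorem~\ref{two equivalences} lets you assume the relevant multiplicities are odd (or restrict to powers of two in the target), which is a different, and so far more fruitful, way of reorganizing the difficulty than the fiber-by-fiber transport lemma you propose.
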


By not specifying a target vertex, we postulate
Conjectures~\ref{st-pebbling graphs} and~\ref{Graham's conjecture}.
Conjecture~\ref{st-pebbling graphs} first appeared in~\cite{many
cycles}, and Chung~\cite{Hypercubes} attributed
Conjecture~\ref{Graham's conjecture} to Graham.

\begin{conjecture}
For all graphs $G$ and $H$, all positive integers $s$ and $t$, we have
$\pi_{st}(G \times H) \leq \pi_s(G) \pi_t(H)$.
\label{st-pebbling graphs}
\end{conjecture}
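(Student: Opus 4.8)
The plan is to attack Conjecture~\ref{st-pebbling graphs} by the classical two-phase strategy for product-pebbling bounds --- pebble ``down the columns'' and then ``across the target row'' --- together with a sharper accounting of the pebbles wasted inside each factor. Write $N=\pi_s(G)\,\pi_t(H)$, let $D$ be an arbitrary distribution on $G\times H$ with $|D|\ge N$, and fix a target vertex $(x,y)$; by Proposition~\ref{relationships} it suffices to move $st$ pebbles to $(x,y)$. Decompose $G\times H$ into the columns $H_u=\{u\}\times V(H)$ for $u\in V(G)$, each isomorphic to $H$, together with the target row $G_y=V(G)\times\{y\}\cong G$. Let $D_u$ be the restriction of $D$ to $H_u$ regarded as a distribution on $H$, and set $n_u=|D_u|$, so that $\sum_u n_u=|D|\ge N$.

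First I would pebble inside each column toward the target row, accumulating pebbles at $(u,y)$. The crude guarantee is that $n_u$ pebbles yield at least $t\lfloor n_u/\pi_t(H,y)\rfloor$ pebbles at $(u,y)$, but this loses up to $\pi_t(H,y)-1$ pebbles per column, a loss an adversary can exploit. The real work is to replace it by an inequality guaranteeing that column $u$ delivers $q_u$ pebbles to $(u,y)$ with $\pi_t(H,y)\,q_u$ at least $n_u$ minus a correction depending only on the number of occupied vertices of $D_u$ --- a $t$-pebbling analogue of Chung's $2$-pebbling property --- so that the per-column losses are charged against occupied vertices and telescope over $u$. Granting such a lemma, the second phase is to pebble within the target row: we are left with a distribution $D'$ on $G_y\cong G$ with $D'(u)=q_u$, whose pebbles occur in units behaving like blocks of $t$, and if the waste-control lemma is strong enough then $\sum_u q_u$, split into such blocks, furnishes at least $\pi_s(G,x)$ effective pebbles on $G$. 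Then $G$-pebbling in the row moves $s$ blocks, i.e.\ $st$ pebbles, to $(x,y)$, and Proposition~\ref{relationships} completes the argument.

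The reconciliation of the two phases is the main obstacle, and I expect it to be the decisive one. The adversary chooses the split $(n_u)$ of the $N$ pebbles among the columns, and can keep every column just below its $\pi_t(H,y)$ threshold, or spread each $D_u$ over many vertices, so as to defeat any naive accounting; absent a structural hypothesis on $H$ (such as the $2$-pebbling property, or $H$ a path or a cycle) there is no known way to force the column contributions to sum to enough effective pebbles. This is precisely why Conjecture~\ref{st-pebbling graphs} already contains, at $s=t=1$, the still-open Graham conjecture. For that reason the route I would actually pursue is indirect: rather than proving Conjecture~\ref{st-pebbling graphs} head-on, show that it is equivalent to --- or is implied by --- one of the other, apparently stronger, conjectures in the hierarchy, using relationships such as Proposition~\ref{relationships} together with Sj\"ostrand-style reductions (cf.\ Theorem~\ref{cover pebbling number}) to the single-vertex starting case, thereby collapsing the problem to one clean case.
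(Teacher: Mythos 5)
This statement is a \emph{conjecture}, not a theorem: the paper offers no proof of Conjecture~\ref{st-pebbling graphs}, and at $s=t=1$ it specializes to Graham's conjecture (Conjecture~\ref{Graham's conjecture}), which remains open. Your proposal does not close this gap. The two-phase column-then-row strategy hinges entirely on the ``waste-control lemma'' --- a $t$-pebbling analogue of the $2$-pebbling property letting per-column losses be charged to occupied vertices --- and you supply neither a statement precise enough to verify nor a proof of it; indeed this is exactly the obstruction that has kept Graham's conjecture open for decades, as you yourself concede in the third paragraph. A proof attempt that ends by acknowledging that ``there is no known way to force the column contributions to sum to enough effective pebbles'' is a research plan, not a proof.

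Your fallback suggestion --- derive the conjecture from an apparently stronger one in the hierarchy --- is in fact what the paper does, and it is worth comparing. Proposition~\ref{all vertices implies graph} shows by a short argument (take the vertex $(x,y)$ achieving the maximum in $\pi_{st}(G\times H)=\max_{(x,y)}\pi_{st}(G\times H,(x,y))$, apply the vertex-level bound, then bound each factor by its own maximum) that Conjecture~\ref{st-pebbling vertices} implies Conjecture~\ref{st-pebbling graphs}; Theorem~\ref{two equivalences} and Theorem~\ref{doubling} further reduce Conjecture~\ref{st-pebbling vertices} to the case of odd $s,t$. But these are conditional reductions among open conjectures, not proofs of any of them, and the Sj\"ostrand-style reduction to single-vertex starting distributions (Theorem~\ref{cover pebbling number}) is unavailable here because the target distributions $t\delta_v$ are not everywhere positive --- the paper even exhibits, in Section~\ref{weighted pebbling}, a counterexample showing how fragile that reduction is outside the cover-pebbling setting. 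In short: no proof exists in the paper, and none is present in your proposal.
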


\begin{conjecture}[Graham's Conjecture]
For all graphs $G$ and $H$, we have $\pi(G \times H) \leq \pi(G) \pi(H)$.
\label{Graham's conjecture}
\end{conjecture}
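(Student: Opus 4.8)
The plan is to reduce Graham's Conjecture to its single-target special case and then attack that case by pebbling the product copy by copy. By Proposition~\ref{relationships}(2) we have $\pi(G\times H)=\max_{(x,y)}\pi(G\times H,(x,y))$ and $\pi(G)\,\pi(H)=\bigl(\max_{x}\pi(G,x)\bigr)\bigl(\max_{y}\pi(H,y)\bigr)$, so it suffices to prove Conjecture~\ref{pebbling vertices}, namely $\pi(G\times H,(x,y))\le\pi(G,x)\,\pi(H,y)$ for every choice of target vertices $x\in V(G)$ and $y\in V(H)$. It is worth noting that the ``dense'' analogue, in which the target distribution is positive on every vertex, is already settled by Theorem~\ref{cover product}; it is precisely the sparse targets such as single vertices that remain open, so this reduction loses nothing. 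Fix $x$ and $y$, write $p=\pi(G,x)$ and $q=\pi(H,y)$, let $D$ be an arbitrary distribution on $G\times H$ with $|D|\ge pq$, and aim to reach $(x,y)$.

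I would then run the two-phase decomposition that underlies essentially every known special case. Regard $G\times H$ as the union of the $|V(G)|$ copies $H_{g}=\{g\}\times H$ of $H$, and let $a_{g}$ be the number of pebbles $D$ places in $H_{g}$, so $\sum_{g}a_{g}=|D|\ge pq$. In phase one, use pebbling moves to push pebbles onto the sub-copy $G\times\{y\}\cong G$; ideally, from a copy holding $a_{g}$ pebbles one extracts about $a_{g}/q$ pebbles onto $(g,y)$, since $\pi_{t}(H,y)$ should grow roughly linearly in $t$. This yields a distribution $D'$ on $G\times\{y\}$. In phase two, if $|D'|\ge p$ then the definition of $\pi(G,x)=p$ lets us move a pebble to $(x,y)$ within $G\times\{y\}$, finishing the proof. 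Thus everything comes down to a quantitative claim about phase one: that $\sum_{g}a_{g}\ge pq$ forces at least $p$ pebbles to be deliverable onto $G\times\{y\}$. This is the form in which the companion Conjecture~\ref{st-pebbling vertices} --- the $t$-pebbling refinement $\pi_{st}(G\times H,(x,y))\le\pi_{s}(G,x)\,\pi_{t}(H,y)$ --- naturally appears, and an induction on $|V(H)|$ or $|V(G)|$ would be the mechanism.

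The main obstacle is exactly the phase-one accounting, and it is the point at which all previous approaches to Graham's Conjecture have stalled. The naive copy-by-copy extraction is lossy: pebbles left in a copy $H_{g}$ below the threshold $q$ contribute nothing, so one can have $\sum_{g}a_{g}$ enormous and yet extract nothing. To recover, one essentially needs $H$ to satisfy a \emph{$2$-pebbling property} (and, for Conjecture~\ref{st-pebbling vertices}, its $t$-pebbling strengthening): roughly, a distribution on $H$ carrying many more than $\pi(H,y)$ pebbles but just below a threshold can still deliver an additional pebble to $y$, which forces $\pi_{t}(H,y)\le(t-1)\,c(H,y)+\pi(H,y)$ with $c(H,y)$ small enough to close the books. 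This holds for trees (Moews~\cite{Moews}), for cycles, and for the other families in the literature, and for such $H$ the argument goes through; but it is known that not every graph has the $2$-pebbling property, so the strategy genuinely breaks in general. The alternative this paper's machinery is aimed at is to bypass the $2$-pebbling property and argue directly at the level of distributions --- establishing Conjecture~\ref{single distributions}, or even Conjecture~\ref{sets of distributions}, for simple building blocks such as paths and trees and then showing it is preserved under Cartesian products and the weighting decompositions those graphs admit. The crux, which I do not expect to yield to a routine calculation, is proving that product closure without tacitly reintroducing the $2$-pebbling property.
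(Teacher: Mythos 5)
There is a genuine gap, and you have in effect named it yourself. The statement you are addressing is Conjecture~\ref{Graham's conjecture}, which is open; the paper does not prove it, and neither does your proposal. What you actually establish is only the first reduction: that it suffices to prove Conjecture~\ref{pebbling vertices}, i.e.\ $\pi(G\times H,(x,y))\le\pi(G,x)\,\pi(H,y)$ for all target vertices. That step is correct and is exactly the paper's Proposition~\ref{all vertices implies graph} (via Proposition~\ref{relationships}), but it is a reduction to another open conjecture, not progress toward a proof. Everything after that is a description of a strategy together with an accurate explanation of why it fails: the phase-one claim --- that a distribution of $pq$ pebbles on $G\times H$ must deliver $p$ pebbles onto the copy $G\times\{y\}$ --- is precisely the unproven crux, and the copy-by-copy extraction is lossy in exactly the way you describe. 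Invoking the $2$-pebbling property does not rescue it in general, since not every graph has that property, and the proposed alternative (proving Conjecture~\ref{single distributions} for building blocks and showing closure under products) is stated as a hope rather than carried out.

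To be clear about the comparison you were asked to make: the paper's contribution on this statement is not a proof but a web of implications and equivalences among generalizations (Propositions~\ref{single distributions implies all sets} and~\ref{all vertices implies graph}, Theorems~\ref{doubling}, \ref{two equivalences}, and~\ref{weighted equivalence}), together with counterexamples showing which analogues fail. Your reduction step coincides with the paper's Proposition~\ref{all vertices implies graph}; beyond that there is nothing to compare, because neither you nor the paper closes the argument. A submission claiming to prove Conjecture~\ref{Graham's conjecture} would need to supply the quantitative phase-one bound (or a genuinely different mechanism), and your own text concedes that you do not have it.
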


\section{Equivalent conjectures}

We now establish some equivalences and logical relationships among the
Conjectures from Section~\ref{products}.  We first note that
Conjectures~\ref{sets of distributions} and~\ref{single distributions}
are equivalent.  We then use a similar argument to show that
Conjectures~\ref{st-pebbling vertices} and~\ref{pebbling vertices}
imply Conjectures~\ref{st-pebbling graphs} and~\ref{Graham's
conjecture}, respectively.  We then establish equivalences within
Conjecture~\ref{st-pebbling vertices} for different values of $s$ and
$t$.  In particular, we show that we can factor out powers of two.
This suggests two more conjectures, one that is equivalent to
Conjecture~\ref{st-pebbling vertices}, and another that is equivalent
to Conjecture~\ref{pebbling vertices}.

\begin{proposition}
Let $G$ and $H$ be fixed graphs.  Then the following conjectures are
equivalent:
\begin{enumerate}
\item $\pi(G \times H, \mathcal{S}_G \cdot \mathcal{S}_H) \leq \pi(G,
\mathcal{S}_G) \pi(H, \mathcal{S}_H)$ for all sets of distributions
$\mathcal{S}_G$ on $G$ and $\mathcal{S}_H$ on $H$.
\label{sets}
\item $\pi(G \times H, D_g \cdot D_h) \leq \pi(G, D_g) \pi(H, D_h)$ for all
individual distributions $D_g$ on $G$ and $D_h$ on $H$.
\label{singles}
\end{enumerate}

In particular, Conjectures~\ref{sets of distributions} and~\ref{single
distributions} are equivalent.
\label{single distributions implies all sets}
\end{proposition}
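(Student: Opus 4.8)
The plan is to prove the two directions of the equivalence separately, noting that the implication from~(\ref{sets}) to~(\ref{singles}) is immediate and the content lies in the reverse direction. For the easy direction, observe that a single distribution $D_g$ on $G$ can be viewed as the one-element set $\mathcal{S}_G = \{D_g\}$, and similarly $\mathcal{S}_H = \{D_h\}$; then $\mathcal{S}_G \cdot \mathcal{S}_H = \{D_g \cdot D_h\}$, so $\pi(G \times H, \mathcal{S}_G \cdot \mathcal{S}_H) = \pi(G \times H, D_g \cdot D_h)$ and $\pi(G, \mathcal{S}_G) = \pi(G, D_g)$, $\pi(H, \mathcal{S}_H) = \pi(H, D_h)$, so statement~(\ref{sets}) applied to these singletons gives statement~(\ref{singles}).

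For the reverse direction, assume~(\ref{singles}) and fix arbitrary sets $\mathcal{S}_G$ and $\mathcal{S}_H$. The key tool is Proposition~\ref{relationships}(\ref{subsets})'s companion, part~(1): the pebbling number of a set is the maximum of the pebbling numbers of its members. First I would apply this to the set $\mathcal{S}_G \cdot \mathcal{S}_H$ on $G \times H$, obtaining
\[
\pi(G \times H, \mathcal{S}_G \cdot \mathcal{S}_H) = \max_{D_g \in \mathcal{S}_G,\ D_h \in \mathcal{S}_H} \pi(G \times H, D_g \cdot D_h),
\]
since every distribution in $\mathcal{S}_G \cdot \mathcal{S}_H$ has the form $D_g \cdot D_h$. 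For each such pair, statement~(\ref{singles}) gives $\pi(G \times H, D_g \cdot D_h) \leq \pi(G, D_g)\,\pi(H, D_h)$. Then I would bound the right-hand side: $\pi(G, D_g) \leq \max_{D_g' \in \mathcal{S}_G} \pi(G, D_g') = \pi(G, \mathcal{S}_G)$ and likewise $\pi(H, D_h) \leq \pi(H, \mathcal{S}_H)$, again by Proposition~\ref{relationships}(1). Multiplying these (both sides are nonnegative) and taking the maximum over all pairs yields $\pi(G \times H, \mathcal{S}_G \cdot \mathcal{S}_H) \leq \pi(G, \mathcal{S}_G)\,\pi(H, \mathcal{S}_H)$, as desired.

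I do not anticipate a serious obstacle; the argument is essentially a bookkeeping exercise in the definitions, and the only point requiring any care is handling the case where some pebbling number is infinite (i.e.\ when a target distribution is unreachable from arbitrarily large distributions). In that case the relevant inequality holds trivially with the convention that $\infty \cdot c = \infty$ for $c \geq 1$, and one should check that $\pi(G, D_g) = \infty$ for some $D_g \in \mathcal{S}_G$ forces $\pi(G, \mathcal{S}_G) = \infty$ and also $\pi(G \times H, D_g \cdot D_h) = \infty$, so both sides are infinite and the bound is vacuous. The final sentence of the proposition, that Conjectures~\ref{sets of distributions} and~\ref{single distributions} are equivalent, then follows by quantifying the equivalence over all graphs $G$ and $H$.
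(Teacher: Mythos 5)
Your proposal is correct and follows essentially the same route as the paper's proof: singletons for the easy direction, and Proposition~\ref{relationships}(1) to reduce the set case to the maximizing pair of distributions for the converse. The extra remark about infinite pebbling numbers is a harmless refinement the paper does not bother with.
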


\begin{proof}
If statement~\ref{sets} holds, applying it with
$\mathcal{S}_G = \{ D_g \}$ and $\mathcal{S}_H = \{ D_h \}$ implies
statement~\ref{singles}.  Conversely, if statement~\ref{singles}
holds, we note that from Proposition~\ref{relationships}, we have
\[
\pi(G \times H, \mathcal{S}_G \cdot \mathcal{S}_H) = \max_{D \in
\mathcal{S}_G \cdot \mathcal{S}_H} \pi(G \times H, D).
\]
Let $D = D_g \cdot D_h$ be a distribution for which this maximum is
achieved and apply statement~\ref{singles} to obtain
\[
\pi(G \times H, \mathcal{S}_G \cdot \mathcal{S}_H) = \pi(G \times H, D_g
\cdot D_h) \leq \pi(G, D_g) \pi(H, D_h).
\]
Clearly, this product is at most
\[
\max_{D_g \in \mathcal{S}_G}
\pi(G, D_g) \max_{D_h \in \mathcal{S}_H} \pi(H, D_h)
=\pi(G, \mathcal{S}_G) \pi(H, \mathcal{S}_H),
\]
by Proposition~\ref{relationships}.
\end{proof}

Proposition~\ref{all vertices implies graph} shows that
Conjecture~\ref{st-pebbling graphs} and Conjecture~\ref{Graham's
conjecture} follow from Conjectures~\ref{st-pebbling vertices} and
Conjecture~\ref{pebbling vertices}, respectively.

\begin{proposition}
Let $G$ and $H$ be graphs and let $s$ and $t$ be positive integers
with the property that $\pi_{st}(G \times H, (x, y)) \leq \pi_s(G, x)
\pi_t(H, y)$ for every pair of vertices $x \in V(G)$ and $y \in V(H)$.
Then $\pi_{st}(G \times H) \leq \pi_s(G) \pi_t(H)$.  Thus,
Conjecture~\ref{st-pebbling vertices} implies
Conjecture~\ref{st-pebbling graphs} and Conjecture~\ref{pebbling
vertices} implies Conjecture~\ref{Graham's conjecture}.
\label{all vertices implies graph}
\end{proposition}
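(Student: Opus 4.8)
The plan is to reduce the statement to the maximum-decomposition recorded in Proposition~\ref{relationships}. By definition, $\pi_{st}(G \times H) = \pi\bigl(G \times H, \mathcal{S}_{st}(G \times H)\bigr)$, and $\mathcal{S}_{st}(G \times H)$ is exactly the set of single-vertex target distributions $\{\, st\,\delta_{(x,y)} : (x,y) \in V(G \times H)\,\}$ on the product graph. Hence part~2 of Proposition~\ref{relationships}, applied to $G \times H$ in place of $G$, gives
\[
\pi_{st}(G \times H) = \max_{(x,y) \in V(G\times H)} \pi_{st}\bigl(G \times H, (x,y)\bigr).
\]
So it suffices to bound each term $\pi_{st}(G \times H, (x,y))$ by $\pi_s(G)\pi_t(H)$.

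Next I would invoke the hypothesis vertex by vertex. For the given $s$ and $t$ and each pair $(x,y)$, we are assuming $\pi_{st}(G \times H, (x,y)) \leq \pi_s(G,x)\,\pi_t(H,y)$. Applying part~2 of Proposition~\ref{relationships} again, this time to $G$ and to $H$ separately, we have $\pi_s(G,x) \leq \max_{x' \in V(G)} \pi_s(G,x') = \pi_s(G)$ and likewise $\pi_t(H,y) \leq \pi_t(H)$. Therefore each term satisfies $\pi_{st}(G \times H, (x,y)) \leq \pi_s(G)\pi_t(H)$, a bound independent of $(x,y)$, and taking the maximum over all $(x,y)$ yields $\pi_{st}(G \times H) \leq \pi_s(G)\pi_t(H)$, as claimed.

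Finally I would read off the two corollaries: specializing to $s=t=1$ gives that Conjecture~\ref{pebbling vertices} implies Conjecture~\ref{Graham's conjecture}, while the general inequality, quantified over all positive integers $s,t$, gives that Conjecture~\ref{st-pebbling vertices} implies Conjecture~\ref{st-pebbling graphs}.

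There is essentially no serious obstacle here; the proof is a two-line chain of inequalities once the right observation is made. The one point to be careful about is the very first step: one must recognize that $\mathcal{S}_{st}(G \times H)$ is precisely the set of single-vertex $st$-pebble targets on the \emph{product} graph, so that Proposition~\ref{relationships}(2) applies verbatim with $G \times H$ as the ambient graph and every vertex written in the form $(x,y)$. After that, the hypothesis plugs in termwise and the maxima separate.
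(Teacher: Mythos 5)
Your proof is correct and follows essentially the same route as the paper: both decompose $\pi_{st}(G\times H)$ as a maximum over vertices via Proposition~\ref{relationships}(2) applied to the product graph, then apply the hypothesis and bound $\pi_s(G,x)\pi_t(H,y)$ by $\pi_s(G)\pi_t(H)$. The only cosmetic difference is that the paper selects a single maximizing vertex and bounds that one term, whereas you bound every term uniformly before taking the maximum; these are interchangeable.
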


\begin{proof}
From Proposition~\ref{relationships}, we know
\[
\pi_{st} (G \times H) = \max_{(x, y) \in V(G \times H)} \pi_{st}(G \times
H, (x, y)).
\]
Let $(x, y)$ be a vertex for which this maximum is achieved.  Then
\[
\pi_{st}(G \times H) = \pi_{st}(G \times H, (x, y)) \leq 
\pi_s(G, x) \pi_t(H, y),
\]
and again by Proposition~\ref{relationships}, we have
\[
\pi_s(G, x) \pi_t(H, y)
\leq \max_{x \in V(G)} \pi_s(G, x) \max_{y \in V(H)} \pi_t(H, y)
= \pi_s(G) \pi_t(H).
\]
\end{proof}

The proof of Proposition~\ref{all vertices implies graph} is similar
to that for Proposition~\ref{single distributions implies all sets};
however, Proposition~\ref{all vertices implies graph} is a
one-directional implication.  Since the sets of distributions used to
define $\pi(G)$ and $\pi_t(G)$ are not arbitrary, there is no easy way to
reverse the implication in Proposition~\ref{all vertices implies
graph} as there was in Proposition~\ref{single distributions implies
all sets}.

We now investigate equivalences within Conjecture~\ref{st-pebbling
vertices} involving different values of $s$ and $t$.  We show that if
Conjecture~\ref{st-pebbling vertices} holds for all graphs for a given
choice of $s$ and $t$, then it also holds if we double either $s$ or
$t$ and keep the other the same.  The basic idea of the proof is as
follows: given a graph $G$ and a target vertex $x_i$, we construct a
new graph $G_i'$ and choose a target vertex whose $s$-pebbling number
equals the $2s$-pebbling number of $x_i$ in $G$.  Then, given a target
vertex $y_j$ in a graph $H$, we compute the $2st$-pebbling number of
$(x_i, y_j)$ in $G \times H$ in terms of the $st$-pebbling number of
$(x', y_j)$ in $G_i' \times H$.  We begin by defining $G_i'$. \\
\textbf{Definition}: Given a graph $G$ and a vertex $x_i \in V(G)$, we
let $G_i'$ be the graph obtained by adding a single vertex $x'$ to
$V(G)$ and a single edge $(x_i, x')$.  Thus, $G$ is a subgraph of
$G_i'$. 

Now given another graph $H$, we define a function $\pi$ from
distributions on $G_i' \times H$ to distributions on $G \times H$. \\
\textbf{Definition}: Given a distribution $D$ on $G_i' \times H$, we
let $\pi(D)$ be the distribution on $G \times H$ obtained by replacing
every pebble on $(x', y)$ with two pebbles on $(x_i, y)$, i.~e.
\[ \pi(D)((x, y)) = \left\{ \begin{array}{rl}
D((x, y)) + 2 D((x', y)), & x = x_i \\ D((x, y)), & x \neq x_i
\end{array} \right. \]

\begin{proposition}
Let $D_0$ and $D_n$ be distributions on $G_i' \times H$ such that
$D_n$ is reachable from $D_0$.  Then a distribution that contains
$\pi(D_n)$ is reachable from $\pi(D_0)$ in $G \times H$.
\label{pulling back}
\end{proposition}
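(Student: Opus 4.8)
The plan is to induct on the length of a pebbling sequence witnessing that $D_n$ is reachable from $D_0$, so that everything reduces to the one-move case. I would first record two routine monotonicity facts. (a) If a distribution $E$ contains $D$ and $D'$ is reachable from $D$, then a distribution containing $D'$ is reachable from $E$: replay the same moves from $E$; the surplus of $E$ over $D$ is never consumed, so each move stays legal and the result contains $D'$. (b) The map $\pi$ is monotone, i.e.\ if $D''$ contains $D'$ then $\pi(D'')$ contains $\pi(D')$, which is immediate from the defining formula for $\pi$. Granting these, it suffices to prove the statement when $D_n = D_1$ is obtained from $D_0$ by a single pebbling move; the general case then follows by iterating (a) and (b): from $\pi(D_0)$ reach a distribution containing $\pi(D_1)$, from there one containing $\pi(D_2)$, and so on.

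Next I would analyze a single move $D \to D'$ in $G_i' \times H$ according to how it meets the pendant column $C = \{(x', y) : y \in V(H)\}$. Since $x'$ has $x_i$ as its only neighbor in $G_i'$, the neighbors of $(x', y)$ in $G_i' \times H$ are precisely $(x_i, y)$ and the vertices $(x', y')$ with $(y, y')$ an edge of $H$; hence exactly four cases arise: (i) the move involves no vertex of $C$; (ii) it pushes from $(x', y)$ to $(x_i, y)$; (iii) it pushes from $(x', y)$ to $(x', y')$ along an edge of $H$; (iv) it pushes from $(x_i, y)$ to $(x', y)$. In case (i) the move uses an edge of $G_i' \times H$ disjoint from $C$, hence an edge of $G \times H$; since $\pi(D)$ dominates $D$ pointwise on $V(G \times H)$ the move is legal from $\pi(D)$, and because $D$ is unchanged on $C$ this single move carries $\pi(D)$ exactly to $\pi(D')$. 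In cases (ii) and (iv) a one-line computation with the formula for $\pi$ shows that $\pi(D')$ is contained in $\pi(D)$ (collapsing a pebble from $C$ onto $x_i$, or expelling one from $x_i$ into $C$, does not raise any coordinate of the image), so $\pi(D)$ already contains $\pi(D')$ and no moves are needed. In case (iii) the formula shows that $\pi(D')$ agrees with $\pi(D)$ except for having four fewer pebbles on $(x_i, y)$ and two more on $(x_i, y')$; the hypothesis $D((x', y)) \geq 2$ forces $\pi(D)((x_i, y)) \geq 4$, so two pebbling moves along the edge joining $(x_i, y)$ and $(x_i, y')$ (an edge of $G \times H$ because $(y, y')$ is an edge of $H$) transform $\pi(D)$ into exactly $\pi(D')$.

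With the one-move case in hand the induction closes: write the witnessing sequence as $D_0 \to D_1 \to \cdots \to D_m$ with $D_m$ containing $D_n$, apply the one-move case together with fact (a) a total of $m$ times to reach from $\pi(D_0)$ a distribution $E$ containing $\pi(D_m)$, and conclude using $\pi(D_m) \supseteq \pi(D_n)$ from fact (b). The only points requiring care are, first, that in case (iii) the ``four out, two in'' effect really is produced by two legal pebbling moves --- in particular that $(x_i, y)$ still holds at least two pebbles after the first move, which the bound $\pi(D)((x_i, y)) \geq 4$ guarantees --- and second, that $\pi$ carries distributions on $G_i' \times H$ to distributions on $G \times H$, whose vertex set omits the column $C$, so that every move the argument produces genuinely lives in $G \times H$ and never needs an edge incident to $x'$. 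Beyond this bookkeeping I foresee no real obstacle.
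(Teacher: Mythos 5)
Your proof is correct and follows essentially the same route as the paper's: induct along the pebbling sequence and analyze a single move according to how it meets the pendant column, simulating a move within that column by two moves on the $x_i$-row and absorbing moves into or out of the column by containment. The only difference is that you make explicit the monotonicity bookkeeping (facts (a) and (b)) that the paper leaves implicit, which is a small improvement in rigor rather than a different argument.
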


\begin{proof}
Suppose $D_0, D_1, \ldots, D_n$ is a sequence of distributions
obtained by pebbling moves from $D_0$ to $D_n$.  We show that for each
$m \geq 0$, a distribution that contains $\pi(D_m)$ is reachable from
$\pi(D_0)$.  Toward that end, suppose by induction that a distribution
containing $\pi(D_m)$ is reachable from $\pi(D_0)$, and consider the
move from $D_m$ to $D_{m+1}$.  That move replaces two pebbles from
some vertex $(x_1, y_1)$ with one pebble on an adjacent vertex $(x_2,
y_2)$.  If neither $x_1$ nor $x_2$ is $x'$, the same move in $G \times
H$ is a pebbling move from $\pi(D_m)$ to $\pi(D_{m+1})$.  If $x_1 =
x_2 = x'$, then $D_m((x', y_1)) \geq 2$, so $\pi(D_m)((x_i, y_1)) \geq
4$.  In this case, we use two pebbling moves to replace four pebbles
on $(x_i, y_1)$ with two pebbles on $(x_i, y_2)$, and these moves go
from $\pi(D_m)$ to $\pi(D_{m+1})$.

The only other cases to consider are pebbling moves from $(x_i, y)$ to
$(x', y)$, or from $(x', y)$ to $(x_i, y)$ for some vertex $y$.  In a
move from $(x_i, y)$ to $(x', y)$, we have $\pi(D_m) = \pi(D_{m+1})$,
and in a move from $(x', y)$ to $(x_i, y)$, we have $\pi(D_m) ((x_i,
y)) = \pi(D_{m+1}) ((x_i, y)) + 3$, since the two pebbles on $(x', y)$
contributed four to $\pi(D_m) ((x_i, y))$, and the single pebble on
$(x_i, y)$ only contributes one to $\pi(D_{m+1}) ((x_i, y))$.  We also
have $\pi(D_m) ((x, y)) = \pi(D_{m+1}) ((x, y))$ for all other
vertices, $x \neq x_i$.  Thus, $\pi(D_m)$ contains $\pi(D_{m+1})$.
\end{proof}

\begin{proposition}
For any graph $G$, any positive integer $s$, and any vertex $x_i \in
V(G)$, we have $\pi_{2s}(G, x_i) = \pi_s(G_i', x')$.
\label{2s-pebbling xi in G}
\end{proposition}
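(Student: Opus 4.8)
The plan is to prove the two inequalities $\pi_{2s}(G,x_i)\le\pi_s(G_i',x')$ and $\pi_s(G_i',x')\le\pi_{2s}(G,x_i)$ separately. The first is essentially immediate from Proposition~\ref{pulling back}: apply it with $H$ the single-vertex graph, so that $G_i'\times H=G_i'$ and $G\times H=G$, using the map $\pi$ that replaces each pebble on $x'$ by two pebbles on $x_i$. Given any distribution $D$ on $G$ with $|D|=\pi_s(G_i',x')$, regard $D$ as a distribution on $G_i'$ carrying no pebbles on $x'$, so that $\pi(D)=D$. Since $|D|\ge\pi_s(G_i',x')$, the target $s\delta_{x'}$ is reachable from $D$ in $G_i'$, and Proposition~\ref{pulling back} then yields that a distribution containing $\pi(s\delta_{x'})=2s\delta_{x_i}$ is reachable from $D$ in $G$. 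As $D$ was an arbitrary distribution of $\pi_s(G_i',x')$ pebbles, $\pi_{2s}(G,x_i)\le\pi_s(G_i',x')$.

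For the reverse inequality I would take an arbitrary distribution $D'$ on $G_i'$ with $|D'|=\pi_{2s}(G,x_i)=:N$ and show that $s\delta_{x'}$ is reachable from $D'$. Write $k=D'(x')$. If $k\ge s$ then $D'$ already contains $s\delta_{x'}$. If $k<s$, let $\bar D$ be the restriction of $D'$ to $V(G)$, so that $|\bar D|=N-k$; the key observation is that if $2(s-k)\delta_{x_i}$ is reachable from $\bar D$ in $G$, then the same moves work in $G_i'$ without disturbing $x'$, and $s-k$ further pebbling moves along the edge $x_ix'$ leave $k+(s-k)=s$ pebbles on $x'$, so $s\delta_{x'}$ is reachable from $D'$. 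Thus it suffices to verify $|\bar D|=N-k\ge\pi_{2(s-k)}(G,x_i)$, i.e.\ $\pi_{2s}(G,x_i)-\pi_{2(s-k)}(G,x_i)\ge k$.

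That last inequality is where the real work lies, and I expect it to be the main obstacle. It follows from the monotonicity estimate $\pi_{r+1}(G,x_i)\ge\pi_r(G,x_i)+1$ for every $r\ge1$, applied along the $2k$ steps from $r=2(s-k)$ to $r=2s$, which produces a gain of at least $2k\ge k$. I would establish this estimate by a coupling argument: if $(r+1)\delta_{x_i}$ is reachable from $D+\delta_{x_i}$, mimic the move sequence starting from $D$, keeping the invariant that the two runs agree at every vertex except one ``deficit'' vertex where the $D$-run is short by a single pebble (a move that cannot be carried over merely relocates the deficit), so that in the end the $D$-run places at least $r$ pebbles on $x_i$; applying this to each distribution of $\pi_{r+1}(G,x_i)-1$ pebbles, augmented by one pebble at $x_i$, gives $\pi_r(G,x_i)\le\pi_{r+1}(G,x_i)-1$. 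An alternative is an induction on $s$ whose base case $s=1$ is handled directly (there the ``$k\ge s$'' branch needs only a single pebble already on $x'$), but the inductive step still requires comparing consecutive $t$-pebbling numbers, so I would isolate and prove the monotonicity lemma up front.
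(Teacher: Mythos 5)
Your proof is correct, and its first half coincides with the paper's: both obtain $\pi_{2s}(G,x_i)\le\pi_s(G_i',x')$ by viewing a distribution on $G$ as a distribution on $G_i'$ that leaves $x'$ empty and invoking Proposition~\ref{pulling back} with $H$ trivial, via $\pi(s\delta_{x'})=2s\delta_{x_i}$. You part ways on the reverse inequality. The paper dismisses starting distributions that occupy $x'$ with a one-sentence ``we may assume $x'$ is unoccupied'' (replacing each pebble on $x'$ by two pebbles on $x_i$ is asserted not to help while only increasing the pebble count), after which reaching $s\delta_{x'}$ in $G_i'$ and reaching $2s\delta_{x_i}$ in $G$ are equivalent for one and the same distribution, and the two pebbling numbers agree. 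You instead treat the $k=D'(x')$ pebbles explicitly: leave them in place, move $2(s-k)$ pebbles to $x_i$ inside the subgraph $G$, and ship $s-k$ of them across the new edge. The price is the auxiliary estimate $\pi_{r+1}(G,x_i)\ge\pi_r(G,x_i)+1$, which the paper never needs; the benefit is that you make fully rigorous exactly the step the paper handles most casually (its ``without loss of generality'' is the subtlest point of the argument and is given without proof). Your deficit-coupling proof of the estimate is sound, with one small caveat: the invariant should be phrased as ``the short run contains the long run minus one pebble at the deficit vertex'' rather than literal agreement off a single vertex, since a skipped move leaves the short run with a surplus pebble at the source vertex; this is precisely the relocation you allude to, and it does not affect the conclusion that the short run ends with at least $r$ pebbles on $x_i$.
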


\begin{proof}
We consider a distribution $D$ on $G_i'$.  We assume
$x'$ is unoccupied in the original distribution, since replacing any
pebbles on $x'$ with two pebbles on $x_i$ does not help us reach $x'$
with additional pebbles.  Thus, we may consider $D$ to be a
distribution on the subgraph $G$ of $G_i'$, and we also have $\pi(D) =
D$ for every vertex $x \in V(G)$.

Now applying Proposition~\ref{pulling back} with $H$ equal to the
trivial graph, shows that if $D$ is a distribution on $G_i \times H
\cong G_i'$ from which $s$ pebbles can be moved to $x'$, then $\pi(D)
= D$ is also a distribution on $G \times H \cong G$ from which $2s$
pebbles can be moved to $x_i$.  The converse also holds; if we can put
$2s$ pebbles on $x_i$, we can then move $s$ pebbles to $x'$.  Thus, we
can move $2s$ pebbles onto $x_i$ if and only if we can move $s$
pebbles onto $x'$, and $\pi_{2s}(G, x_i) = \pi_s(G_i', x')$.
\end{proof}

\begin{theorem}
Suppose there are some values $s$ and $t$ such that $\pi_{st}(G \times
H, (x, y)) \leq \pi_s(G, x) \pi_t(H, y)$ for all graphs $G$ and $H$ and
all vertices $(x, y) \in V(G \times H)$.  Then $\pi_{2st} (G \times H,
(x, y)) \leq \pi_{2s}(G, x) \pi_t(H, y)$ and $\pi_{2st} (G \times H, (x, y))
\leq \pi_s(G, x) \pi_{2t}(H, y)$ for all graphs $G$ and $H$ and all
vertices $(x, y) \in V(G \times H)$.
\label{doubling}
\end{theorem}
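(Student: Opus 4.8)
The plan is to factor the $2st$-pebbling number of a target $(x_i,y_j)$ in $G\times H$ through the graph $G_i'\times H$: first bound $\pi_{2st}(G\times H,(x_i,y_j))$ by $\pi_{st}(G_i'\times H,(x',y_j))$ using Proposition~\ref{pulling back}, then apply the hypothesis to the pair of graphs $G_i'$ and $H$, and finally rewrite $\pi_s(G_i',x')$ as $\pi_{2s}(G,x_i)$ via Proposition~\ref{2s-pebbling xi in G}. Here $x_i\in V(G)$ and $y_j\in V(H)$ are arbitrary, so $(x,y)=(x_i,y_j)$ ranges over all of $V(G\times H)$.

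The first step is the inequality
\[
\pi_{2st}(G\times H,(x_i,y_j))\le \pi_{st}(G_i'\times H,(x',y_j)).
\]
To prove it, put $n=\pi_{st}(G_i'\times H,(x',y_j))$ and let $D$ be any distribution of $n$ pebbles on $G\times H$. Since $G\times H$ is a subgraph of $G_i'\times H$, we regard $D$ as a distribution on $G_i'\times H$ whose restriction to the layer $\{x'\}\times V(H)$ is identically zero; in particular $\pi(D)=D$, directly from the definition of $\pi$. Because $|D|=n$, some distribution containing $st\,\delta_{(x',y_j)}$ is reachable from $D$ in $G_i'\times H$, say through a sequence of pebbling moves $D=D_0,D_1,\dots,D_m$ with $D_m((x',y_j))\ge st$. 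Proposition~\ref{pulling back} then guarantees that a distribution containing $\pi(D_m)$ is reachable from $\pi(D_0)=D$ inside $G\times H$, and since
\[
\pi(D_m)((x_i,y_j))=D_m((x_i,y_j))+2D_m((x',y_j))\ge 2st,
\]
it follows that $2st$ pebbles can be moved to $(x_i,y_j)$ from $D$. As $D$ was arbitrary, the displayed bound holds.

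Now the hypothesis of the theorem, applied to the graphs $G_i'$ and $H$ and the vertices $x'$ and $y_j$, gives $\pi_{st}(G_i'\times H,(x',y_j))\le \pi_s(G_i',x')\,\pi_t(H,y_j)$, and Proposition~\ref{2s-pebbling xi in G} gives $\pi_s(G_i',x')=\pi_{2s}(G,x_i)$. Chaining these with the first step yields
\[
\pi_{2st}(G\times H,(x_i,y_j))\le \pi_{2s}(G,x_i)\,\pi_t(H,y_j),
\]
which is the first claimed bound. The second bound, $\pi_{2st}(G\times H,(x,y))\le \pi_s(G,x)\,\pi_{2t}(H,y)$, follows by repeating the argument with the roles of $G$ and $H$ interchanged, using that $G\times H$ and $H\times G$ are isomorphic.

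There is no serious obstacle once Proposition~\ref{pulling back} is in hand; the only points needing care are the bookkeeping observations that $D$, viewed on $G_i'\times H$, is fixed by $\pi$ and that $\pi(D_m)$ really places at least $2st$ pebbles on $(x_i,y_j)$, both immediate from the definitions of $G_i'$ and $\pi$. (In fact the reverse inequality $\pi_{2st}(G\times H,(x_i,y_j))\ge \pi_{st}(G_i'\times H,(x',y_j))$ also holds, by the same reduction to distributions avoiding the layer $\{x'\}\times V(H)$ that appears in the proof of Proposition~\ref{2s-pebbling xi in G}, but only the inequality above is needed here.)
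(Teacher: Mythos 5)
Your proposal is correct and follows essentially the same route as the paper: both establish the key inequality $\pi_{2st}(G\times H,(x_i,y_j))\le \pi_{st}(G_i'\times H,(x',y_j))$ via Proposition~\ref{pulling back} (the paper phrases this contrapositively, you phrase it directly), then apply the hypothesis to $G_i'$ and $H$ and convert $\pi_s(G_i',x')$ to $\pi_{2s}(G,x_i)$ using Proposition~\ref{2s-pebbling xi in G}. The additional bookkeeping you supply (that $\pi(D)=D$ for distributions supported off the $x'$-layer, and that $\pi(D_m)$ places at least $2st$ pebbles on $(x_i,y_j)$) matches the paper's reasoning.
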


\begin{proof}
Let $D$ be any distribution on $G \times H$ from which
$2st$ pebbles cannot be moved to the target vertex $(x_i, y_j)$.
Since $G \times H$ is a subgraph of $G_i' \times H$, we may also
regard $D$ as a distribution on $G_i' \times H$.  Furthermore, we have
$\pi(D) = D$.  By Proposition~\ref{pulling back}, we cannot move $st$
pebbles onto $(x', y_j)$ in $G_i' \times H$ from $D$.  Thus,
\[
\pi_{2st}(G \times H, (x_i, y_j)) \leq \pi_{st} (G_i' \times H, (x', y_j))
\leq \pi_s(G_i', x') \pi_t(H, y_j),
\]
and by Proposition~\ref{2s-pebbling xi in G}, $\pi_s(G_i', x') \pi_t(H, y_j)
= \pi_{2s} (G, x_i) \pi_t(H, y_j)$.
Similarly, we have
\[
\pi_{2st}(G \times H, (x_i, y_j)) 
\leq \pi_s(G, x_i) \pi_t(H_j', y') = \pi_s (G, x_i) \pi_{2t}(H, y_j),
\]
as desired.
\end{proof}

Motivated by this result, we make the following additional conjectures
as additional specializations of Conjecture~\ref{st-pebbling
vertices}.

\begin{conjecture}
For all graphs $G$ and $H$, all positive, odd integers $s$ and $t$,
and all vertices $x \in V(G)$ and $y \in V(H)$, we have $\pi_{st}(G
\times H, (x, y)) \leq \pi_s(G, x) \pi_t(H, y)$.
\label{odd-pebbling vertices}
\end{conjecture}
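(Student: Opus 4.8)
The plan is to prove the inequality directly by a two-phase \emph{collapse-and-transfer} argument. Fix odd $s$ and $t$, a target vertex $(x,y)$, and an arbitrary distribution $D$ on $G \times H$ with exactly $\pi_s(G,x)\,\pi_t(H,y)$ pebbles; the goal is to show that $st\,\delta_{(x,y)}$ is reachable from $D$. I would view $G \times H$ as $|V(H)|$ copies of $G$, the \emph{fibers} $F_v = G \times \{v\}$ indexed by $v \in V(H)$ and glued along the $H$-edges, noting that the target column $\{x\} \times V(H)$ is itself a copy of $H$. In the first phase I would use only $G$-moves inside each fiber to amass pebbles on the column vertex $(x,v)$; in the second phase I would use only $H$-moves along the column to deliver pebbles to $(x,y)$. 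The reason for separating the phases is that a stack of $s$ pebbles sitting at a column vertex behaves, under $H$-moves, exactly like a single ``super-pebble'': an $H$-move that consumes two super-pebbles to produce one mirrors an ordinary pebbling move on $H$, so $s$-bundles on the column obey the pebbling theory of $H$ with all multiplicities divided by $s$.

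The collapse step is as follows. For each fiber set $p_v = \sum_{u \in V(G)} D((u,v))$ and let $c_v = \lfloor p_v / \pi_s(G,x) \rfloor$. Using the definition of $\pi_s(G,x)$ I would move $s\,c_v$ pebbles onto $(x,v)$ within $F_v$; this rests on a \emph{transfer bound} $\pi_{cs}(G,x) \le c\,\pi_s(G,x)$ valid for every $c$, which I would first need to establish as a near-additivity property of $t$-pebbling numbers. The collapsed configuration then dominates $s\,E$ on the column, where $E = \sum_v c_v\,\delta_v$ is a distribution on $H$. If it turns out that $|E| = \sum_v c_v \ge \pi_t(H,y)$, then by the definition of $\pi_t(H,y)$ applied to the super-pebbles on the column I can move $t$ super-pebbles, that is $st$ genuine pebbles, to $(x,y)$, which is exactly what is required.

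The entire difficulty is thereby concentrated in the mass bound $\sum_v c_v \ge \pi_t(H,y)$. Because $\sum_v p_v = \pi_s(G,x)\,\pi_t(H,y)$ while each floor discards a sub-threshold remainder in its fiber, summing over the occupied fibers can collectively throw away enough pebbles to drop $\sum_v c_v$ below $\pi_t(H,y)$, so the naive count fails. Controlling this remainder loss --- equivalently, showing that the \emph{non-multiplicativity} of $t$-pebbling numbers across the product does not destroy the bound --- is the step I expect to be the main obstacle. I would attack it either by a smoothing/exchange argument that, before collapsing, redistributes pebbles among fibers by $H$-moves so that no fiber is left stranded just below a multiple of $\pi_s(G,x)$, or by the weight-function (potential) method, choosing optimal nonnegative weight functions on $G$ and $H$ that certify $\pi_s(G,x)$ and $\pi_t(H,y)$ and testing their product on $G \times H$.

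I should record honestly that this remainder-control step is precisely the obstruction that has kept Graham's conjecture open: the odd case already contains the subcase $s = t = 1$, which is Conjecture~\ref{pebbling vertices}, so an unconditional execution of this plan would in particular resolve Conjecture~\ref{Graham's conjecture}. The oddness hypothesis does not simplify the collapse or the mass bound directly; its role, via Theorem~\ref{doubling} together with Propositions~\ref{pulling back} and~\ref{2s-pebbling xi in G} and the pendant-vertex graphs $G_i'$ and $H_j'$, is that every positive integer is $2^a$ times an odd number and that doubling preserves the bound, so establishing the odd case would suffice to recover Conjecture~\ref{st-pebbling vertices} for all $s$ and $t$. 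The odd layer is thus the irreducible core, and I expect essentially all of the genuine work --- and the genuine open difficulty --- to lie in the transfer bound and the remainder control for odd multiplicities.
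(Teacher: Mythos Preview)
The statement you are attempting is a \emph{conjecture}, not a theorem: the paper does not prove it and explicitly leaves it open. What the paper does establish is Theorem~\ref{two equivalences}, namely that Conjecture~\ref{odd-pebbling vertices} is equivalent to Conjecture~\ref{st-pebbling vertices}; there is no ``paper's own proof'' to compare against.

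Your proposal is accordingly not a proof but a programme, and you identify the gap yourself. The transfer bound $\pi_{cs}(G,x)\le c\,\pi_s(G,x)$ is fine (partition the pebbles into $c$ sub-distributions of size $\pi_s(G,x)$ and solve each independently), and the collapse-then-transfer architecture is standard. The substantive step is the mass bound $\sum_v c_v \ge \pi_t(H,y)$, and here your argument stops: the floor operation can discard up to $\pi_s(G,x)-1$ pebbles per occupied fiber, and neither the smoothing/exchange idea nor the product-of-weight-functions idea is carried out. As you note, the subcase $s=t=1$ is already Conjecture~\ref{pebbling vertices}, which via Proposition~\ref{all vertices implies graph} implies Graham's Conjecture; the oddness hypothesis gives you nothing extra at the remainder-control step. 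So the proposal reduces the problem to exactly the open problem it started from, and should be read as an outline of where the difficulty lies rather than as a proof.
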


\begin{conjecture}
For all graphs $G$ and $H$, all nonnegative integers $a$ and $b$,
and all vertices $x \in V(G)$ and $y \in V(H)$, we have $\pi_{2^{a+b}}(G
\times H, (x, y)) \leq \pi_{2^a}(G, x) \pi_{2^b}(H, y)$.
\label{powers of two}
\end{conjecture}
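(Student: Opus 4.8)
The plan is to prove the inequality by induction on the exponent sum $n = a+b$, using Theorem~\ref{doubling} as the engine for the inductive step. For fixed nonnegative integers $a,b$, write $P(a,b)$ for the assertion that $\pi_{2^{a+b}}(G \times H, (x,y)) \leq \pi_{2^a}(G,x)\,\pi_{2^b}(H,y)$ holds for \emph{all} graphs $G,H$ and \emph{all} vertices $x \in V(G)$, $y \in V(H)$; the goal is to establish $P(a,b)$ for every $a,b \geq 0$. The base case $P(0,0)$ reads $\pi(G \times H,(x,y)) \leq \pi(G,x)\,\pi(H,y)$, which is precisely Conjecture~\ref{pebbling vertices}.

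For the inductive step I would apply Theorem~\ref{doubling} with $s = 2^a$ and $t = 2^b$. Its hypothesis---that $\pi_{st}(G\times H,(x,y)) \leq \pi_s(G,x)\,\pi_t(H,y)$ for all graphs and all vertices---is exactly $P(a,b)$, and here the fact that $P(a,b)$ is quantified over \emph{all} graphs is essential, since Theorem~\ref{doubling} constructs the auxiliary graph $G_i'$ and reasons about $\pi_s(G_i',x')$ through Proposition~\ref{2s-pebbling xi in G}. The two conclusions of Theorem~\ref{doubling} then yield $\pi_{2^{a+b+1}}(G\times H,(x,y)) \leq \pi_{2^{a+1}}(G,x)\,\pi_{2^b}(H,y)$ and $\pi_{2^{a+b+1}}(G\times H,(x,y)) \leq \pi_{2^a}(G,x)\,\pi_{2^{b+1}}(H,y)$ for all graphs and vertices, i.e.\ $P(a+1,b)$ and $P(a,b+1)$. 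Thus $P(a,b)$ implies both $P(a+1,b)$ and $P(a,b+1)$, and starting from $P(0,0)$ one reaches an arbitrary $P(a,b)$ by doubling the first factor $a$ times and then the second factor $b$ times; an induction on $a+b$ packages this cleanly, and the inductive machinery is entirely unconditional because Theorem~\ref{doubling} is already proved.

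The main obstacle is therefore the base case $P(0,0)$. The doubling machinery reduces the whole two-parameter family in Conjecture~\ref{powers of two} to this single statement, so all of the difficulty is concentrated there---and $P(0,0)$ is Conjecture~\ref{pebbling vertices}, the vertex-targeted form of Graham's Conjecture, which remains open. A direct attack on $P(0,0)$ would have to produce, from any distribution of $\pi(G,x)\,\pi(H,y)$ pebbles on $G\times H$, a sequence of moves delivering a pebble to $(x,y)$. The natural strategy is to pebble fiber by fiber: first consolidate pebbles within each $H$-fiber, then transport the resulting pebbles along the $G$-direction toward the fiber over $x$ and finally within that fiber to $(x,y)$, bounding the per-fiber cost by a pebbling number of $H$ and the cross-fiber aggregation by a pebbling number of $G$. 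The difficulty---the same one that makes Graham's Conjecture hard, and that already surfaces for $C_5 \times C_5$---is that a worst-case distribution on $G\times H$ need not factor as a product of worst-case distributions on $G$ and $H$, so the two one-dimensional bounds cannot simply be multiplied; controlling the interaction between the two directions, for instance through weight-function arguments in the spirit of Moews~\cite{Moews}, is where I expect the argument to stall.
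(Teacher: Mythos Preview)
Your analysis is correct and matches the paper's treatment. The statement is Conjecture~\ref{powers of two}, which the paper does \emph{not} prove outright; rather, Theorem~\ref{two equivalences} establishes that it is equivalent to Conjecture~\ref{pebbling vertices}. Your inductive reduction via Theorem~\ref{doubling}---starting from $P(0,0)$ and doubling one coordinate at a time---is exactly the argument the paper sketches for the nontrivial direction of that equivalence (``the converse follows by applying Theorem~\ref{doubling} inductively on the power of $2$ which divides $st$''), and you have correctly identified that the base case $P(0,0)$ is the open Conjecture~\ref{pebbling vertices}, so no unconditional proof is available.
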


\begin{theorem}
Conjecture~\ref{st-pebbling vertices} is equivalent to
Conjecture~\ref{odd-pebbling vertices}, and Conjecture~\ref{pebbling
vertices} is equivalent to Conjecture~\ref{powers of two}.
\label{two equivalences}
\end{theorem}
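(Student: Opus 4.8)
The plan is to derive both equivalences from Theorem~\ref{doubling}, using only the elementary fact that every positive integer $n$ factors uniquely as $n = 2^a n'$ with $n'$ odd. Two of the four implications are immediate: Conjecture~\ref{st-pebbling vertices} contains Conjecture~\ref{odd-pebbling vertices} as the special case in which $s$ and $t$ are restricted to be odd, and Conjecture~\ref{powers of two} contains Conjecture~\ref{pebbling vertices} as the special case $a = b = 0$, since $\pi_{2^0}(G,v) = \pi_1(G,v) = \pi(G,v)$. So it remains to prove that Conjecture~\ref{odd-pebbling vertices} implies Conjecture~\ref{st-pebbling vertices} and that Conjecture~\ref{pebbling vertices} implies Conjecture~\ref{powers of two}.

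For the first of these I would suppose Conjecture~\ref{odd-pebbling vertices} holds and let $s$ and $t$ be arbitrary positive integers, writing $s = 2^a s'$ and $t = 2^b t'$ with $s'$ and $t'$ odd. By Conjecture~\ref{odd-pebbling vertices}, the inequality $\pi_{s't'}(G \times H, (x,y)) \le \pi_{s'}(G,x)\pi_{t'}(H,y)$ holds for all graphs $G$ and $H$ and all vertices $(x,y)$. The key observation is that this is precisely the hypothesis of Theorem~\ref{doubling}, with the pair $(s',t')$ in the role of $(s,t)$, and that the conclusion of Theorem~\ref{doubling} is again a statement valid for \emph{all} graphs and \emph{all} vertices, so it can itself serve as the hypothesis for a further application. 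Applying Theorem~\ref{doubling} $a$ times to double the first index --- passing through $(s',t'), (2s',t'), \ldots, (2^a s', t')$ --- and then $b$ times to double the second index --- passing through $(2^a s', t'), (2^a s', 2t'), \ldots, (2^a s', 2^b t') = (s,t)$ --- yields $\pi_{st}(G \times H, (x,y)) \le \pi_s(G,x)\pi_t(H,y)$ for all $G$, $H$, and $(x,y)$, which is Conjecture~\ref{st-pebbling vertices}.

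The implication Conjecture~\ref{pebbling vertices} $\Rightarrow$ Conjecture~\ref{powers of two} is the same argument run from the base pair $(s',t') = (1,1)$: Conjecture~\ref{pebbling vertices} asserts $\pi_1(G \times H,(x,y)) \le \pi_1(G,x)\pi_1(H,y)$ for all graphs and vertices, and applying Theorem~\ref{doubling} $a$ times in the first index and then $b$ times in the second produces $\pi_{2^{a+b}}(G \times H,(x,y)) \le \pi_{2^a}(G,x)\pi_{2^b}(H,y)$ for all $G$, $H$, $a$, $b$, and $(x,y)$, which is Conjecture~\ref{powers of two}.

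I do not expect a serious obstacle here, since Theorem~\ref{doubling} is doing all of the real work; the one point that requires care is bookkeeping with quantifiers. Theorem~\ref{doubling} can be iterated only because both its hypothesis and its conclusion are universally quantified over all graphs and all target vertices, so each application must retain the full ``for all graphs $G$ and $H$'' statement rather than fixing a single product $G \times H$. Keeping track of this, the required induction on $a+b$ --- or, if one prefers, two successive inductions, one on $a$ and one on $b$ --- goes through routinely.
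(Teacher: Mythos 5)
Your proof is correct and follows the same route as the paper: the forward directions are specializations, and the reverse directions come from iterating Theorem~\ref{doubling} on the powers of two dividing $s$ and $t$. Your explicit attention to the universal quantification needed to re-apply Theorem~\ref{doubling} at each step is a point the paper leaves implicit, but the argument is the same.
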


\begin{proof}
Conjecture~\ref{odd-pebbling vertices} clearly follows
from Conjecture~\ref{st-pebbling vertices}, and
Conjecture~\ref{pebbling vertices} follows from Conjecture~\ref{powers
of two} by letting $a = b = 0$.  In each case, the converse follows by
applying Theorem~\ref{doubling} inductively on the power of $2$ which
divides $st$.
\end{proof}

\section{Variants on Pebbling}
\label{variants}

Theorem~\ref{two equivalences} is interesting, but it would be more
satisfying to dispense with Conjectures~\ref{odd-pebbling vertices}
and~\ref{powers of two} and prove Conjectures~\ref{st-pebbling
vertices} and~\ref{pebbling vertices} are equivalent.  If we examine
the proof of Proposition~\ref{doubling} and Theorem~\ref{powers of
two}, we find that the powers of two in these results arise from the
rules of pebbling moves, and in particular, that two pebbles are
required from one vertex to put a pebble on an adjacent vertex.  In
Section~\ref{weighted pebbling}, we define pebbling on weighted
graphs.  We determine the cost of moving a pebble from one vertex to
an adjacent vertex by considering the weight of the edge between them.
Using these revised rules, we find that analogs of
Conjectures~\ref{st-pebbling vertices} and~\ref{pebbling vertices} are
indeed equivalent.

\subsection{Pebbling on Weighted Graphs}
\label{weighted pebbling}

In a \emph{weighted graph}, we attach positive integral weights to the
edges.  We use these weights to specify the cost of moving a pebble
from one vertex to another. \\
\textbf{Definition}: A \emph{weighted graph} is a graph $G = (V, E)$
together with a function $w : E(G) \rightarrow \mathbb{N}^+$.  We say
$w(e)$ is the \emph{weight} of the edge $e$. \\
\textbf{Definition}: A \emph{pebbling move along the edge $e = (x,
x')$ in a weighted graph} consists of removing $w(e)$ pebbles from
$x$, moving one of the pebbles onto $x'$, and throwing the other
pebbles away. 

We can then define each of the pebbling numbers $\pi(G, \mathcal{S})$,
$\pi(G, D)$, $\pi(G, v)$, $\pi(G)$, $\pi_t(G, v)$, $\pi_t(G)$, $\pi(G, t)$, and
$\gamma(G)$ for weighted graphs exactly as we did for unweighted
graphs.  We note that for this form of pebbling, any connected graph
may be regarded as a complete graph, since any missing edge $(v, w)$
may be added with a weight equal to the product of weights on some
path from $v$ to $w$.  We may also assume that the weight of each edge
is equal to the minimum of all such product; if there is an edge $e =
(v, w)$ for which this is not the case, we may use a path with a
smaller product to move a pebble from $v$ to $w$ instead of using $e$.

We show that the obvious analog of Sj\"{o}strund's Theorem
(Theorem~\ref{cover pebbling number}) is false by answering
Question~\ref{weighted cover pebbling} in the negative.

\begin{question}
If $G$ is a weighted graph, is $\gamma(G)$ the minimum number of
pebbles $N$ such that placing $N$ pebbles on a single vertex allows us
to cover $G$?
\label{weighted cover pebbling}
\end{question} 

\textbf{Answer}: No.  Consider the complete graph $K_4$ on vertices
$\{x_1, x_2, x_3, x_4 \}$ in which the weight of the edges $(x_1,
x_2)$ and $(x_3, x_4)$ is two, and the weight of every other edge is
five.  Then we can cover the graph if we start with thirteen pebbles
on any single vertex, however, we cannot cover the graph if we start
with nine pebbles on $x_1$ and four pebbles on $x_2$.

We now define the Cartesian product of two weighted graphs. \\
\textbf{Definition}: If $G$ and $H$ are two weighted graphs, their
Cartesian product is the weighted graph $G \times H$ whose vertex set
and edge set are the same as for the corresponding unweighted graph,
and whose weight function is given by
\[ \begin{array}{c}
w((x, y), (x, y')) = w(y, y') \mbox{ if } (y, y') \in E(H) \\
w((x, y), (x', y)) = w(x, x') \mbox{ if } (x, x') \in E(G).
\end{array} \]
We can now make each of the conjectures in Section~\ref{products} for
weighted graphs.  In each case, the conjecture on weighted graphs is
stronger than the corresponding conjecture on unweighted graphs, since
we can consider an unweighted graph to be a weighted graph in which
the weight of each edge is $2$.  We limit ourselves to the following
conjectures:

\begin{conjecture}
For all weighted graphs $G$ and $H$, all positive integers $s$ and
$t$, and all vertices $x \in V(G)$ and $y \in V(H)$, we have $\pi_{st}(G
\times H, (x, y)) \leq \pi_s(G, x) \pi_t(H, y)$.
\label{weighted st-pebbling vertices}
\end{conjecture}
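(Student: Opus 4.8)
The plan is to attack the inequality directly rather than reducing it to its $s=t=1$ special case. First I would exploit the structural simplification noted just before the conjecture: since every connected weighted graph may be regarded as a complete graph in which the weight $w(u,v)$ of each edge is the minimum, over all paths from $u$ to $v$, of the product of the edge weights along the path, I may assume $G$, $H$, and hence $G \times H$, are complete. Under this normalization the product weight factorizes: to move one pebble from $(g,h)$ to $(g',h')$ one corrects the $G$-coordinate and then the $H$-coordinate, so the cost is $w_G(g,g')\,w_H(h,h')$, giving $w_{G \times H}((g,h),(g',h')) = w_G(g,g')\,w_H(h,h')$. This multiplicative factorization is precisely the feature absent from the unweighted doubling arguments of Theorem~\ref{doubling}, and it is what makes the weighted statement the natural object to confront head-on.

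Second, I would set $N = \pi_s(G,x)\,\pi_t(H,y)$, fix an arbitrary distribution $D$ on $G \times H$ with $N$ pebbles, and aim to reach $st\,\delta_{(x,y)}$ by a two-phase collapse. In phase one, for each $g \in V(G)$ I treat the fiber $\{g\} \times V(H)$ as a copy of $H$ carrying the restricted distribution $D_g$; whenever $|D_g| \geq \pi_t(H,y)$, the definition of $\pi_t(H,y)$ lets me move $t$ pebbles to $(g,y)$. In phase two the target row $V(G) \times \{y\}$ is a copy of $G$ carrying at least the pebbles deposited in phase one, and I would then pebble within this row toward $x$ with the aim of accumulating $st$ pebbles at $(x,y)$.

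The hard part, and the reason the statement remains open even in this clean multiplicative setting, is the coupling between the two phases. A fiber with fewer than $\pi_t(H,y)$ pebbles contributes nothing in phase one yet still consumed part of the budget $N$, and dually the pebbles surviving phase one need not be arranged in $G$ so that the $G$-pebbling can finish, because the floor losses in the two phases compound multiplicatively. I would therefore replace the crude two-phase count by a single weight-function certificate on $G \times H$ formed as the product of an optimal certificate for $\pi_s(G,x)$ on $G$ with one for $\pi_t(H,y)$ on $H$, and try to prove that this product certificate bounds the number of pebbles needed to reach $(x,y)$. Establishing the validity of such product certificates — equivalently, that direct routing along the factorized weights is never beaten by opportunistic consolidation at intermediate vertices — is the crux. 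Note that the easy shortcut via Sj\"ostrand's Theorem (Theorem~\ref{cover pebbling number}) is unavailable here, since the $K_4$ example answering Question~\ref{weighted cover pebbling} shows that theorem already fails for weighted graphs; hence no reduction to single-pile starting distributions can be used to sidestep the coupling, and the obstacle must be met directly.
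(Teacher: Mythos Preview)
The statement is Conjecture~\ref{weighted st-pebbling vertices}, which the paper does not prove; it is presented as open. The paper's only result about it is Theorem~\ref{weighted equivalence}: via the pendant-vertex construction $G_{i,s}'$ (attach to $x_i$ a new leaf $x'$ by an edge of weight $s$, so that $\pi_{st}(G,x_i)=\pi_t(G_{i,s}',x')$ by Proposition~\ref{weighted st-pebbling xi in G}), the general-$(s,t)$ conjecture is shown equivalent to its $s=t=1$ specialization, Conjecture~\ref{weighted pebbling vertices}. That base case remains open, and hence so does the statement you are addressing. There is therefore no proof in the paper to compare your attempt against.

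Your write-up is an outline of a direct attack, not a proof, and you say so yourself. The two-phase collapse is the standard heuristic and stalls exactly where you indicate: fibers carrying fewer than $\pi_t(H,y)$ pebbles contribute nothing in phase one yet consume part of the budget, and you provide no bound on how this loss compounds with the $G$-pebbling in phase two. Your proposed remedy---a product weight-function certificate on $G\times H$---is a hope rather than an argument; you do not establish that such product certificates are valid for weighted pebbling, and as you correctly observe, the negative answer to Question~\ref{weighted cover pebbling} removes Sj\"ostrand's theorem as a shortcut. So the gap is real and unclosed. Relative to the paper, you are attempting something strictly stronger: the paper only reduces the $(s,t)$ case to the $(1,1)$ case, whereas you try to settle the conjecture outright and do not get past the coupling obstruction you yourself identify.
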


\begin{conjecture}
For all weighted graphs $G$ and $H$ and all vertices $x \in V(G)$ and
$y \in V(H)$, we have $\pi(G~\times~H, (x, y)) \leq \pi(G, x) \pi(H, y)$.
\label{weighted pebbling vertices}
\end{conjecture}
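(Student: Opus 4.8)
The plan is to attack Conjecture~\ref{weighted pebbling vertices} directly, proving $\pi(G\times H,(x,y))\le \pi(G,x)\,\pi(H,y)$ for all weighted graphs. First I would apply the reduction recorded just before Question~\ref{weighted cover pebbling}: every connected weighted graph may be replaced by the complete weighted graph on the same vertices in which the edge $(u,v)$ receives the minimum, over all $u$--$v$ paths, of the product of the weights along that path. This leaves every pebbling number in the statement unchanged, so I may assume $G$, $H$, and hence $G\times H$ are complete, with weight functions $w_G$ and $w_H$ realizing shortest products to the targets $x$ and $y$. The essential structural consequence is that the cheapest route from $(u,v)$ to $(x,y)$ costs $w_G(u,x)\,w_H(v,y)$, the product of the two coordinate costs, and in particular $w_G(u,x)\le w_G(u,u')\,w_G(u',x)$ for all $u,u'$, with the analogous inequality in $H$.

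With this in hand I would take an arbitrary distribution $D$ on $G\times H$ with $|D|\ge \pi(G,x)\,\pi(H,y)$ and show that one pebble can be driven to $(x,y)$. I would decompose $G\times H$ into the $H$-fibers $\{u\}\times H$ indexed by $u\in V(G)$. Within each fiber I would run $H$-pebbling toward the level $v=y$, consolidating that fiber's pebbles onto $(u,y)$; by the definition of $\pi(H,y)$, a fiber carrying at least $\pi(H,y)$ pebbles delivers at least one pebble to $(u,y)$, and more generally I would track how many pebbles survive this collapse. The collapse yields a distribution on the single copy $G\times\{y\}$, whereupon the problem reduces to moving one pebble to $(x,y)$ by $G$-pebbling, which succeeds once the collapsed distribution carries $\pi(G,x)$ pebbles; a smallest-counterexample induction on $|V(G)|+|V(H)|$ would back up the quantitative survival claim.

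To make the collapse exact I would introduce a potential in the spirit of Moews~\cite{Moews}, assigning a pebble at $(u,v)$ the value $1/\bigl(w_G(u,x)\,w_H(v,y)\bigr)$. A pebble on $(x,y)$ then has value $1$, and because the product weights factor, the triangle-type inequalities from the completion make every legal pebbling move along either kind of edge non-increasing in total potential. The goal is to prove that any distribution failing to reach $(x,y)$ carries at most $\pi(G,x)\,\pi(H,y)-1$ pebbles, which I would approach by showing that the extremal unreachable distributions concentrate all their pebbles on a single vertex, transporting the concentration phenomenon behind Sj\"ostrand's Theorem~\ref{cover pebbling number} and Theorem~\ref{cover product} from the cover setting into the single-target product.

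I expect this concentration step to be the main obstacle, and it is precisely why the statement stands as a conjecture rather than a theorem. The two coordinate directions resist independent optimization: pebbles consumed while consolidating an $H$-fiber are gone before the $G$-move begins, and because pebbling is discrete these losses do not aggregate as smoothly as the fractional potential predicts. A distribution spread thinly across many fibers, each fiber falling just short of $\pi(H,y)$, can survive the collapse contributing almost nothing yet still carry many pebbles, threatening the bound. Quantifying exactly how many pebbles survive a worst-case fiber collapse, and proving that this can never force the total past $\pi(G,x)\,\pi(H,y)-1$, is exactly the Graham-type product inequality at issue; supplying a clean weight-function proof of that survival estimate is the crux the whole argument would rest on.
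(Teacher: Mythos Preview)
The paper does not prove this statement: it is stated as Conjecture~\ref{weighted pebbling vertices} and left open. What the paper \emph{does} prove is Theorem~\ref{weighted equivalence}, which shows that Conjecture~\ref{weighted pebbling vertices} is equivalent to the ostensibly stronger Conjecture~\ref{weighted st-pebbling vertices} (the $st$-version). That equivalence is established by the pendant-vertex construction $G_{i,s}'$ and the pull-back map $\pi$ on distributions (Propositions~\ref{weighted pulling back} and~\ref{weighted st-pebbling xi in G}), not by any direct attack on the inequality $\pi(G\times H,(x,y))\le\pi(G,x)\,\pi(H,y)$ itself.

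Your proposal, then, is not a proof to be compared with the paper's proof; it is a strategy for resolving an open problem, and you correctly flag the point at which it stalls. The potential $1/\bigl(w_G(u,x)\,w_H(v,y)\bigr)$ is non-increasing under pebbling moves, so it certifies the \emph{fractional} version of the inequality, but it cannot by itself force integer reachability: having total potential at least $1$ does not imply that a pebble can actually be delivered to $(x,y)$. Likewise the fiber-collapse step loses exactly the information you need, since a distribution spread over many $H$-fibers each just below $\pi(H,y)$ may collapse to nothing on $G\times\{y\}$ while still carrying arbitrarily many pebbles. Bounding the size of an unreachable distribution by $\pi(G,x)\,\pi(H,y)-1$ after such a collapse is not a lemma you can invoke; it \emph{is} Graham's conjecture in the weighted setting, so the argument is circular at the crux. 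In short, the outline is a reasonable heuristic picture, but there is no proof here, and the paper does not supply one either.
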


Chung essentially proved Conjecture~\ref{weighted pebbling vertices}
when $G$ and $H$ are powers of $K_2$, i.~e.\ cubes in which the
weights of parallel edges are equal~(see \cite{Hypercubes},
Theorem~3).  We show Conjectures~\ref{weighted st-pebbling vertices}
and~\ref{weighted pebbling vertices} are equivalent; the proof is
similar to the proof of Theorem~\ref{doubling}.  We first modify the
required definitions. \\
\textbf{Definitions}: Given a weighted graph $G$, a positive integer
$s$, and a vertex $x_i \in V(G)$, we let $G_{i,s}'$ be the graph
obtained by adding a vertex $x'$ to $V(G)$ and a single edge $(x_i,
x')$ with weight $s$.  Given another graph $H$, we define the function
$\pi$ from distributions on $G_{i, s}' \times H$ to distributions on
$G \times H$ by replacing the pebbles on every vertex $(x', y)$ with
$s$ pebbles on $(x_i, y)$, i.~e.
\[ \pi(D)((x, y)) = \left\{ \begin{array}{rl}
D((x, y)) + s D((x', y)), & x = x_i \\ D((x, y)), & x \neq x_i
\end{array} \right. \]
for every distribution $D$ on $G_{i, s}' \times H$.

We give the analogs for Propositions~\ref{pulling back}
and~\ref{2s-pebbling xi in G} without proof.  The proofs are similar
to those of the original propositions.  We then prove
Theorem~\ref{weighted equivalence}.

\begin{proposition}
Let $D_0$ and $D_n$ be distributions on $G_{i,s}' \times H$ such that
$D_n$ is reachable from $D_0$.  Then $\pi(D_n)$ is reachable from
$\pi(D_0)$ in $G \times H$.
\label{weighted pulling back}
\end{proposition}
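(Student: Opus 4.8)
The plan is to follow the proof of Proposition~\ref{pulling back} almost verbatim, replacing its case analysis by a weighted version. Fix a sequence of pebbling moves $D_0, D_1, \ldots, D_n$ in $G_{i,s}' \times H$ realizing the reachability of $D_n$ from $D_0$, and prove by induction on $m$ that $\pi(D_m)$ is reachable from $\pi(D_0)$ in $G \times H$; taking $m = n$ gives the statement. The case $m = 0$ is trivial because $\pi(D_0)$ contains itself. The one fact used throughout is that $\pi$ dominates its argument on every vertex of $G \times H$: indeed $\pi(D)((x,y)) = D((x,y))$ when $x \neq x_i$, and $\pi(D)((x_i,y)) = D((x_i,y)) + s\,D((x',y)) \geq D((x_i,y))$. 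Hence any distribution $E_m$ containing $\pi(D_m)$ has at least as many pebbles on each vertex of $G \times H$ as $D_m$ does, which is what lets us mimic legal moves.

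For the inductive step, assume a distribution $E_m$ containing $\pi(D_m)$ is reachable from $\pi(D_0)$, and look at the move $D_m \to D_{m+1}$, carried out along an edge $e$ of $G_{i,s}' \times H$. The new vertices of $G_{i,s}' \times H$ are the $(x',y)$, the new edges are the weight-$s$ edges $\{(x_i,y),(x',y)\}$ and the edges $\{(x',y_1),(x',y_2)\}$ of weight $w(y_1,y_2)$, and every other edge already lies in $G \times H$, so there are four cases. If $e$ touches no vertex $(x',y)$, then $e$ is an edge of $G \times H$ of the same weight, the same move is legal from $E_m$ by the domination remark, and pushing $\pi$ through the update shows it lands on a distribution containing $\pi(D_{m+1})$. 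If $e = \{(x_i,y),(x',y)\}$ with the pebble moved \emph{onto} $(x',y)$, a direct substitution into the definition of $\pi$ gives $\pi(D_{m+1}) = \pi(D_m)$, so $E_m$ already works and no move is made. If instead the pebble is moved \emph{onto} $(x_i,y)$, then $\pi(D_{m+1})((x_i,y)) = \pi(D_m)((x_i,y)) + 1 - s^2 \leq \pi(D_m)((x_i,y))$ while $\pi(D_{m+1})$ agrees with $\pi(D_m)$ elsewhere, so $\pi(D_m)$ — hence $E_m$ — contains $\pi(D_{m+1})$ and again nothing need be done. Finally, if $e = \{(x',y_1),(x',y_2)\}$ with weight $w = w(y_1,y_2)$, then $D_m((x',y_1)) \geq w$, so $\pi(D_m)((x_i,y_1)) \geq sw$ and likewise $E_m((x_i,y_1)) \geq sw$; performing $s$ successive pebbling moves along the weight-$w$ edge $\{(x_i,y_1),(x_i,y_2)\}$ of $G \times H$ removes $sw$ pebbles from $(x_i,y_1)$ and adds $s$ to $(x_i,y_2)$, and one checks that this is exactly the change from $\pi(D_m)$ to $\pi(D_{m+1})$, so the result contains $\pi(D_{m+1})$. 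Since these cases exhaust the possibilities for $e$, the induction closes.

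The only step requiring genuine work is the last case, the weighted analogue of the "two pebbling moves" step in Proposition~\ref{pulling back} (there $s = 2$): one must check both that the $s$ simulated moves in the $x_i$-fiber of $G \times H$ have enough pebbles to proceed and that they effect precisely the required coordinatewise change. The other mild trap is in the two $\{(x_i,y),(x',y)\}$ cases, where $\pi$ of the new distribution equals, or is coordinatewise below, $\pi$ of the old one; it is tempting to think pebbles have been lost, but because reachability only asks for a distribution that \emph{contains} the target, no move is needed and nothing is actually lost. Beyond the last case all of the computations are short substitutions into the definition of $\pi$, so the argument is otherwise routine.
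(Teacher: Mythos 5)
Your proof is correct and is exactly the argument the paper intends: the paper omits the proof of Proposition~\ref{weighted pulling back}, stating only that it is similar to that of Proposition~\ref{pulling back}, and your case analysis (with $s$ simulated moves in the $x_i$-fiber replacing the two moves of the unweighted case, and $1-s^2$ replacing $-3$) is the faithful weighted adaptation of that induction.
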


\begin{proposition}
For any weighted graph $G$, any positive integers $s$ and $t$, and any
vertex $x_i \in V(G)$, we have $\pi_{st}(G, x_i) = \pi_t(G_{i,s}', x')$.
\label{weighted st-pebbling xi in G}
\end{proposition}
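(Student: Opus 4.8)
The plan is to imitate the proof of Proposition~\ref{2s-pebbling xi in G}, with the doubling map replaced by the ``multiply by $s$'' map $\pi$ and with Proposition~\ref{pulling back} replaced by Proposition~\ref{weighted pulling back}. Two structural facts drive everything. First, $x'$ is a leaf of $G_{i,s}'$ whose only neighbour is $x_i$, joined to it by an edge of weight $s$, so the only way to increase the pebble count on $x'$ is to spend $s$ pebbles from $x_i$ for each pebble delivered, and a strategy whose target lives on $x'$ never needs to move a pebble off $x'$. Second, if $D$ is a distribution on $G_{i,s}'$ with $D(x')=0$, then $\pi(D)$, regarded as a distribution on the subgraph $G$, is exactly $D$.

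First I would establish the core equivalence: for a distribution $D$ on $G_{i,s}'$ with $D(x')=0$, one can move $t$ pebbles to $x'$ in $G_{i,s}'$ from $D$ if and only if one can move $st$ pebbles to $x_i$ in $G$ from $D$. For the forward direction, apply Proposition~\ref{weighted pulling back} with $H$ the trivial graph: a sequence of pebbling moves from $D$ to a distribution $D_n$ with $D_n(x')\ge t$ pulls back to a sequence from $\pi(D)=D$ to $\pi(D_n)$ in $G$, and $\pi(D_n)(x_i)=D_n(x_i)+s\,D_n(x')\ge st$. The converse is immediate: assemble $st$ pebbles on $x_i$ in $G\subseteq G_{i,s}'$ and then make $t$ pebbling moves along the weight-$s$ edge $(x_i,x')$.

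It remains to upgrade this equivalence, which only concerns distributions with $x'$ empty, to the equality of the two pebbling numbers, and this reduction is the only genuinely delicate point. One inequality is free: a distribution of $\pi_t(G_{i,s}',x')$ pebbles on $G$ is also a distribution on $G_{i,s}'$ with $x'$ empty, hence allows $t$ pebbles to reach $x'$, hence allows $st$ pebbles to reach $x_i$, so $\pi_{st}(G,x_i)\le\pi_t(G_{i,s}',x')$. For the other inequality one must handle a distribution $D$ of $\pi_{st}(G,x_i)$ pebbles on $G_{i,s}'$ with $c:=D(x')>0$ (the case $c\ge t$ being trivial). Let $E$ be $D$ with these $c$ pebbles deleted; then $E+c\,\delta_{x_i}$ has $\pi_{st}(G,x_i)$ pebbles, so it allows $st$ pebbles to reach $x_i$ in $G$. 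Invoking the standard fact that an optimal strategy toward a single target vertex need never move a pebble off that vertex, the $c$ pebbles placed on $x_i$ are untouched, so $E$ itself allows $st-c$ pebbles to reach $x_i$; since $s$ is a positive integer, $st-c\ge s(t-c)$, so from $E$ one can push $t-c$ further pebbles onto $x'$, and together with the $c$ pebbles $D$ already has on $x'$ this yields the required $t$ pebbles on $x'$. Hence $\pi_t(G_{i,s}',x')\le\pi_{st}(G,x_i)$, and combining the two inequalities gives the claimed equality. The step I expect to be the main obstacle is precisely this last reduction — verifying that the worst-case starting distributions may be taken with $x'$ unoccupied — rather than the core equivalence, which is a routine transcription of the unweighted case; note also that the inequality $st-c\ge s(t-c)$, and hence the whole argument, uses that $s$ is a positive integer.
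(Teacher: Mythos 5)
Your proof is correct and follows essentially the route the paper intends: Proposition~\ref{weighted st-pebbling xi in G} is stated in the paper without proof, with a pointer to the argument for Proposition~\ref{2s-pebbling xi in G}, and your argument is exactly that argument with the doubling map replaced by the multiply-by-$s$ map and Proposition~\ref{pulling back} replaced by Proposition~\ref{weighted pulling back}. If anything you are more careful than the paper's model proof, which dismisses starting distributions with pebbles on $x'$ in a single sentence, whereas you justify that reduction explicitly via the no-moves-off-the-target observation and the inequality $st-c\ge s(t-c)$.
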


\begin{theorem}
Conjectures~\ref{weighted st-pebbling vertices} and~\ref{weighted
pebbling vertices} are equivalent.
\label{weighted equivalence}
\end{theorem}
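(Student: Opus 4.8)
The plan is to mirror the structure of Theorem~\ref{doubling} and Theorem~\ref{two equivalences}, replacing the ``factor out a power of $2$'' step with a ``factor out the full multiplier $s$ at once'' step. One direction is immediate: Conjecture~\ref{weighted pebbling vertices} is the special case $s=t=1$ of Conjecture~\ref{weighted st-pebbling vertices}, so it suffices to prove that Conjecture~\ref{weighted pebbling vertices} implies Conjecture~\ref{weighted st-pebbling vertices}. First I would fix weighted graphs $G$ and $H$, positive integers $s,t$, and target vertices $x_i\in V(G)$, $y_j\in V(H)$. Form $G_{i,s}'$ and $H_{j,t}'$ as in the definitions preceding the theorem, with new vertices $x'$ and $y'$ attached by edges of weight $s$ and $t$ respectively.

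Next I would run the argument of Theorem~\ref{doubling} verbatim, but using the weighted auxiliary results. Let $D$ be any distribution on $G\times H$ from which $st$ pebbles cannot be moved to $(x_i,y_j)$. View $D$ as a distribution on $G_{i,s}'\times H$; since $D$ is supported on $V(G\times H)$ we have $\pi(D)=D$. By Proposition~\ref{weighted pulling back} (contrapositive), $1$ pebble cannot be moved to $(x',y_j)$ in $G_{i,s}'\times H$ from $D$ either, for otherwise pulling back would move $s$ pebbles to $(x_i,\cdot)$ and hence, combined with the rest of $D$, yield $st$ pebbles on $(x_i,y_j)$ --- here I should be a little careful that the pull-back statement is phrased for ``reachable'' in the containment sense, so reaching $st\,\delta_{(x',y_j)}$ pulls back to reaching a distribution containing $\pi(st\,\delta_{(x',y_j)}) \supseteq s\cdot st\,\delta_{(x_i,y_j)}$; but I only need a single pebble on $(x',y_j)$, whose pull-back already contains $s\,\delta_{(x_i,y_j)}$, which is not yet $st$. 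So the correct bookkeeping is: if $1$ pebble is reachable on $(x',y_j)$ in $G_{i,s}'\times H$, then $s$ pebbles are reachable on $(x_i,y_j)$ in $G\times H$; iterating the argument $t$-fold (or directly applying Proposition~\ref{weighted st-pebbling xi in G}) gives that $t$ pebbles reachable on $(x',y_j)$ forces $st$ pebbles reachable on $(x_i,y_j)$. Thus $D$ admits no way to move $t$ pebbles to $(x',y_j)$, so $|D| < \pi_t(G_{i,s}'\times H,(x',y_j))$. This yields
\[
\pi_{st}(G\times H,(x_i,y_j)) \le \pi_t(G_{i,s}'\times H,(x',y_j)) \le \pi(G_{i,s}'\times H, (x',y_j)')\,,
\]
wait --- that last step is not what I want. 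The cleaner route is to apply the hypothesis (Conjecture~\ref{weighted pebbling vertices} for the pair $G_{i,s}'$ and $H$, with $s=t=1$) only after also pulling the $H$-side back through $H_{j,t}'$.

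So the key steps, in order, are: (i) apply Proposition~\ref{weighted pulling back} on the $G$-side to get $\pi_{st}(G\times H,(x_i,y_j))\le \pi_t(G_{i,s}'\times H,(x',y_j))$; (ii) apply it again on the $H$-side to get $\pi_t(G_{i,s}'\times H,(x',y_j))\le \pi_1(G_{i,s}'\times H_{j,t}',(x',y'))=\pi(G_{i,s}'\times H_{j,t}',(x',y'))$; (iii) apply the assumed Conjecture~\ref{weighted pebbling vertices} to the weighted graphs $G_{i,s}'$ and $H_{j,t}'$ to bound this by $\pi(G_{i,s}',x')\,\pi(H_{j,t}',y')$; (iv) apply Proposition~\ref{weighted st-pebbling xi in G} twice, in the form $\pi(G_{i,s}',x')=\pi_1(G_{i,s}',x')=\pi_s(G,x_i)$ and $\pi(H_{j,t}',y')=\pi_t(H,y_j)$, to conclude $\pi_{st}(G\times H,(x_i,y_j))\le\pi_s(G,x_i)\,\pi_t(H,y_j)$. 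Since $(x_i,y_j)$ was arbitrary this is exactly Conjecture~\ref{weighted st-pebbling vertices}.

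The main obstacle --- really the only subtle point --- is step (i)/(ii): getting the direction and the containment semantics of Proposition~\ref{weighted pulling back} right, and in particular checking that when we regard $D$ (a distribution on $G\times H$, or on $G_{i,s}'\times H$) as living in the larger product $G_{i,s}'\times H_{j,t}'$, the map $\pi$ fixes it, so that non-reachability downstairs genuinely transfers to non-reachability upstairs. Everything else is the same chain of inequalities as in Theorem~\ref{doubling}, with ``$2s$'' replaced by ``$s$'' throughout, which is exactly why dropping the power-of-two restriction works once edge weights are allowed: the auxiliary edge can carry any weight $s$, not just $2$.
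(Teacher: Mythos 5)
Your proof is correct and follows essentially the same route as the paper's: one direction by setting $s=t=1$, and the converse by pulling back through $G_{i,s}'$ and then $H_{j,t}'$ via Proposition~\ref{weighted pulling back}, applying Conjecture~\ref{weighted pebbling vertices} to $G_{i,s}'\times H_{j,t}'$, and translating back with Proposition~\ref{weighted st-pebbling xi in G}. The final chain of inequalities in your steps (i)--(iv) is exactly the paper's argument; the mid-proposal detour is harmlessly self-corrected.
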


\begin{proof}
Conjectures~\ref{weighted st-pebbling vertices}
implies Conjecture~\ref{weighted pebbling vertices} by letting $s = t
= 1$.

Conversely, given a weighted graph $G$, a vertex $x_i \in V(G)$, and
an integer $s$, let $D$ be a distribution on $G \times H$ from which
$st$ pebbles cannot be placed on $(x_i, y_j)$.  Then by
Proposition~\ref{weighted pulling back}, we cannot place $t$ pebbles
on $(x', y_j)$ starting from $D$  in $G_{i, s}' \times H$, so
\[
\pi_{st}(G \times H, (x_i, y_j)) \leq \pi_t (G_{i,s}' \times H, (x', y_j)).
\]
Similarly, we form $H_{j, t}'$ by adding a vertex $y'$ and an edge
$(y_j, y')$ with a weight of $t$.  Then $\pi_t(H, y_j) = \pi(H_{j, t},
y')$ by Proposition~\ref{weighted st-pebbling xi in G}.  By
Proposition~\ref{weighted pulling back}, if $D'$ is a distribution on
$G_{i, s} \times H$ from which $t$ pebbles cannot be placed on $(x',
y_j)$, then in $G_{i, s}' \times H_{j, t}'$ we cannot place one pebble
on $(x', y')$ starting from $D'$.  Thus,
\[
\pi_t (G_{i,s}' \times H, (x', y_j)) \leq \pi(G_{i,s}' \times H_{j, t}',
(x', y')).
\]
But now if Conjecture~\ref{weighted pebbling vertices} holds for every
vertex in every graph, applying it gives 
\[
\pi_{st}(G \times H, (x_i, y_j)) \leq \pi(G_{i,s}' \times H_{j, t}', (x',
y')) \leq \pi(G_{i,s}', x') \pi(H_{j, t}', y'),
\]
and by Proposition~\ref{weighted st-pebbling xi in G}, we have
$\pi(G_{i,s}', x') \pi(H_{j, t}', y') = \pi_s(G, x_i) \pi_t(H, y_j)$,
as desired.
\end{proof}

\subsection{Target-selectable pebbling numbers}
\label{or-pebbling}

In this section we define a new pebbling number $\rho(G, \mathcal{S})$ and
investigate analogs of the conjectures in Section~\ref{products}.  As
with the definition of the usual pebbling number, we do not allow
ourselves to choose the starting distribution of $\rho(G, \mathcal{S})$
pebbles, but after those pebbles are placed, we allow ourselves to
choose which target distribution from $\mathcal{S}$ we wish to reach.
This definition was originally motivated by an attempt to prove a
version of Graham's conjecture.  We observe that if the vertex $v$ in
$G$ is unoccupied, we can move a pebble onto $v$ if and only if we can
move two pebbles onto some neighbor of $v$ in $G$, or equivalently, in
the graph obtained by deleting $v$ from $G$.  Therefore, it seems
reasonable to try to prove an analog of Graham's conjecture with this
pebbling number by using a form of induction on the number of vertices
in $G$.  However, we give simple counterexamples to show that $\rho(G,
\mathcal{S})$ does not satisfy what seems to be the natural analog to
Graham's conjecture. \\
\textbf{Definition}: Let $\mathcal{S}$ be a set of distributions on a
graph $G$.  Then the \emph{target-selectable pebbling number of
$\mathcal{S}$ in $G$}, denoted $\rho(G, \mathcal{S})$, is the smallest
number such that some distribution $D \in \mathcal{S}$ is reachable
from every distribution starting with $\rho(G, \mathcal{S})$ pebbles on
$G$.  We also define $\rho_t(G) = \rho(G, \mathcal{S}_t)$ and $\rho(G, v) =
\rho(G, \delta_v)$. 

We begin by formalizing our previous observation that $\pi(G, v) = \rho(G,
v)$ can be computed by determining how many pebbles are required to
put two pebbles on a neighbor of $v$.

\begin{proposition}
We have $\pi(G, v) = \rho(G, v) = \rho(G, N_2)$ where $N_2$ is the set of
distributions given by $N_2 = \{ 2 \delta_w : (v, w) \in E(G)
\}$.
\end{proposition}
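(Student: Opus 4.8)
The plan is to work through the chain of equalities from left to right. The first equality $\pi(G,v)=\rho(G,v)$ is pure bookkeeping: by definition $\rho(G,v)=\rho(G,\{\delta_v\})$, and for a one-element family $\mathcal{S}=\{\delta_v\}$ the phrases ``some $D\in\mathcal{S}$ is reachable'' and ``every $D\in\mathcal{S}$ is reachable'' say exactly the same thing, so $\rho(G,\{\delta_v\})=\pi(G,\{\delta_v\})=\pi(G,v)$ straight from the definitions in Section~\ref{distros}.

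For $\rho(G,v)=\rho(G,N_2)$ I would prove two inequalities, writing $m=\rho(G,v)=\pi(G,v)$ and $n=\rho(G,N_2)$. For $m\le n$: let $D$ be a distribution with $|D|\ge n$. If $D(v)\ge 1$ then $D$ already contains $\delta_v$; otherwise some $2\delta_w\in N_2$ is reachable from $D$, and following the moves that realize it by one additional pebbling move along the edge $(w,v)$ deposits a pebble on $v$. Either way $\delta_v$ is reachable from $D$, so $m\le n$, and this direction needs nothing beyond the definitions. For $n\le m$: let $D$ be a distribution with $|D|\ge m$, so that $\delta_v$ is reachable from $D$ by a sequence of moves $D=D_0\to\cdots\to D_k$ with $D_k(v)\ge 1$. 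After reducing to the case $D(v)=0$ and to a sequence in which no pebble lands on $v$ before the final move, that final move removes two pebbles from some neighbor $w$ of $v$, so $D_{k-1}$ contains $2\delta_w\in N_2$ and some element of $N_2$ is reachable from $D$; hence $n\le m$.

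The step I expect to be the main obstacle is the reduction invoked in the second inequality: that in analyzing how many pebbles are needed to reach $N_2$ one may assume the target vertex $v$ carries no pebbles and that no pebble is ever routed through $v$. One half is the easy remark that a distribution from which $\delta_v$ is not reachable must leave $v$ empty; the other half is the observation from the motivating discussion that shuttling a pebble onto $v$ from a neighbor and back off again costs two pebbles on $v$, hence four on its neighbors, and that a pebble sitting on $v$ contributes nothing toward a distribution supported on the neighbors of $v$ --- so the relevant pebbling happens entirely in the graph $G-v$ obtained by deleting $v$, where ``reach $\delta_v$'' and ``reach some $2\delta_w$'' differ only by a single pebbling move along an edge at $v$. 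Once this reduction is pinned down (and the degenerate case of an isolated $v$, where $N_2=\emptyset$, is set aside), the two inequalities above close the argument.
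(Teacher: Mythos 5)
The paper states this proposition without proof, so your argument has to stand entirely on its own. Your first equality $\pi(G,v)=\rho(G,v)$ and your inequality $\pi(G,v)\le\rho(G,N_2)$ are correct (the latter is exactly the ``append one more move along $(w,v)$'' argument). The trouble is the reverse inequality. The reduction you flag as the main obstacle --- that in computing $\rho(G,N_2)$ one may assume $D(v)=0$ --- is not available, and without it $\rho(G,N_2)\le\pi(G,v)$ is false. By definition $\rho(G,N_2)$ must be large enough that \emph{every} distribution of that size reaches some $2\delta_w$, including distributions that pile pebbles on $v$ itself; such distributions reach $\delta_v$ for free (which is why critical distributions for $\pi(G,v)$ may be assumed to leave $v$ empty), but they are maximally inefficient at producing two pebbles on a neighbor of $v$. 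Concretely, take $G=K_2$ with $V=\{v,w\}$: then $\pi(K_2,v)=2$, while $N_2=\{2\delta_w\}$ and $\rho(K_2,N_2)=\pi(K_2,2\delta_w)=4$, since three pebbles on $v$ yield only one pebble on $w$. Similarly, on the path $P_3$ with vertices $v,w,u$ in order, the distribution $3\delta_v+\delta_u$ has $4=\pi(P_3,v)$ pebbles but cannot reach $2\delta_w$.

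So the gap is not a technicality to be ``pinned down'': the statement needs the restriction to distributions supported on $V(G)\setminus\{v\}$, which is exactly how the authors phrase their motivating observation (``if the vertex $v$ in $G$ is unoccupied\ldots''). A correct formalization is that $\pi(G,v)$ is the least $N$ such that every distribution of $N$ pebbles with $D(v)=0$ reaches some member of $N_2$; your two inequalities, restricted to such distributions, do prove that version, the only extra ingredient being the easy remark (which you already make) that a distribution failing to reach $\delta_v$ must leave $v$ empty. But $\rho(G,N_2)$ as defined in the paper does not encode that restriction, so the equality $\rho(G,v)=\rho(G,N_2)$ should be treated as an error, or at best an abuse of notation, in the source rather than something your argument can recover.
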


We compute some values of $\rho(G, \mathcal{S})$ and relate them to the
usual pebbling number. 

\textbf{Observations}: Let $G$ be any graph with $n$ vertices.  Then:
\begin{enumerate}
\item We have $\rho_1(G) = 1$.  Thus, $\rho_1(G \times H) = \rho_1(G) \rho_1(H)$
for every graph $G$ and $H$, so the analog of Graham's conjecture for
the target-selectable pebbling number holds trivially.
\item We also have $\rho_2(G) = n+1$.  In particular, if $H$ has $m > 1$
vertices, then $\rho_2(G \times H) = mn+1 > \rho_2(G) \rho_1(H) = n+1$.  This
contradicts the analog of Conjectures~\ref{sets of distributions},
and~\ref{st-pebbling graphs} for the target-selectable pebbling
number.
\item For any distribution $D$ on $G$, we have $\rho(G, \{ D \}) = \pi(G,
\{ D \})$.  Thus, the analogs for Conjectures~\ref{single
distributions}, \ref{st-pebbling vertices}, and~\ref{pebbling
vertices} are equivalent to the original conjectures.
\end{enumerate}

We also note interesting relationships between this pebbling number
for paths and the regular pebbling number for cycles, as given by
Proposition~\ref{g(P)}.
We first define the distributions on the path that we are interested in. \\
\textbf{Definition}: 
Let the vertices on the path $P_n$ be $\{ x_1, x_2, \ldots, x_n \}$ in order.
Then we define $\mathcal{D}_t$ as the set of distributions
given by 
\[
\mathcal{D}_t = \{ t \delta_1, t \delta_n \}.  
\]

\begin{proposition}
If $n\ge 2$ and $i\ge 0$, we have $\rho(P_n,
\mathcal{D}_{2^i}) = \pi(C_{n+2i-1})$.
\label{g(P)}
\end{proposition}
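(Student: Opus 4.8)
The plan is to prove both quantities equal a common "path" quantity. Throughout write $m=n+2i-1$. The case $i=0$ is immediate: a distribution that is unsolvable for $\mathcal D_1$ on $P_n$ carries no pebble on either endpoint, hence is an unsolvable distribution for $\mathcal D_2$ on the inner path $P_{n-2}$, and conversely; deleting the (necessarily unoccupied) target $v$ from $C_{n-1}$ leaves a copy of this same $P_{n-2}$ whose endpoints are the neighbours of $v$, and reaching $v$ is the same as moving two pebbles to one of those endpoints. So assume $i\ge 1$.

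First reduce the cycle side. By vertex-transitivity of $C_m$ and Proposition~\ref{relationships}, $\pi(C_m)=\pi(C_m,v)$ for any fixed $v$. Realise $C_m$ as the path $P_n$ on vertices $z_1,\dots ,z_n$ together with a pendant path of length $i$ glued to $z_1$ and another glued to $z_n$, the two pendant paths meeting at $v$ (so $v$ is at distance $i$ from $z_1$ and from $z_n$). If a distribution $D$ is supported on $\{z_1,\dots ,z_n\}$, then a no-backtracking argument shows $D$ reaches $v$ in $C_m$ if and only if $D$ can accumulate $2^i$ pebbles on $z_1$ or on $z_n$ inside $P_n$: pushing $2^i$ pebbles up a pendant path of length $i$ delivers exactly one pebble to $v$, and conversely, after discarding wasteful moves, any route to $v$ must accumulate $2^i$ pebbles at the base of one pendant path. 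Thus the $v$-unsolvable distributions on $C_m$ that are supported on the central $P_n$ are exactly the $\mathcal D_{2^i}$-unsolvable distributions on $P_n$, so $\rho(P_n,\mathcal D_{2^i})\le\pi(C_m)$, and the reverse inequality reduces to: some largest $v$-unsolvable distribution on $C_m$ is supported on the central $P_n$.

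For the reverse inequality I would invoke the weight-function method. Deleting $v$ turns $C_m$ into a path $P_{m-1}$ whose two endpoints $e,e'$ are the neighbours of $v$; since reaching an unoccupied $v$ is equivalent to moving two pebbles to a neighbour of $v$, and on a path the weight $D\mapsto\sum_u D(u)2^{-d(u,e)}$ determines exactly how many pebbles can be moved to $e$ (a standard weight argument; cf.\ \cite{Moews}), a distribution $D$ on $P_{m-1}$ fails to reach $v$ iff its weight toward $e$ and toward $e'$ are both strictly less than $2$. Among all such $D$, $|D|$ is maximised by piling pebbles where the per-pebble weight toward \emph{both} endpoints is smallest, namely on the single central vertex of $P_{m-1}$ when $m-1$ is even (equivalently $n$ odd) or on the two adjacent central vertices when $m-1$ is odd ($n$ even); these central vertices lie on the central $P_n$, and there the condition "weight toward $e$ below $2$" translates (a pebble on $z_j$ sits at distance $i+j-2$ from $e$) precisely into $\sum_j D(z_j)2^{-(j-1)}<2^i$, i.e.\ "cannot push $2^i$ pebbles to $z_1$". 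Hence the unrestricted maximum coincides with the $\mathcal D_{2^i}$-unsolvable maximum, and $\rho(P_n,\mathcal D_{2^i})=\pi(C_m)=\pi(C_{n+2i-1})$. (Equivalently, one may just carry out this integer optimisation to get $\rho(P_n,\mathcal D_{2^i})$ in closed form and compare with the classical values $\pi(C_{2k})=2^k$ and $\pi(C_{2k+1})=2\lfloor 2^{k+1}/3\rfloor+1$.)

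The hard part is this reverse inequality — showing the extremal cycle distribution can be taken "central" — and in particular the integrality bookkeeping when $n$ is even. There the optimum spreads over two adjacent central vertices, and ruling out one extra pebble requires the delicate argument (the support must collapse entirely onto those two vertices because any pebble elsewhere costs too much in the combined weight, after which the two individual weight constraints force the total down by one more) that also underlies the classical formula for the pebbling number of an odd cycle; the AM–GM-type relaxation of the two weight constraints alone is off by one in exactly the residue class where $2^{(n-1)/2+i}\equiv 2\pmod 3$, so that refinement is unavoidable.
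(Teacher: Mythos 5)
Your setup for the forward inequality---realizing $C_{n+2i-1}$ as the central $P_n$ plus two pendant paths of length $i$ meeting at the target $v$, and matching the $v$-unsolvable distributions supported on the central path with the $\mathcal{D}_{2^i}$-unsolvable distributions on $P_n$---is exactly the paper's, and your conclusion $\rho(P_n,\mathcal{D}_{2^i})\le\pi(C_{n+2i-1})$ is sound. Your weight characterization is also correct and is actually a nice tool: on a path the number of pebbles deliverable to an endpoint $e$ equals $\lfloor\sum_u D(u)2^{-d(u,e)}\rfloor$ (the weight is monotone under moves, and greedy collapse loses strictly less than one pebble's worth), so a distribution on $C_m\setminus v$ fails to reach $v$ if and only if both endpoint weights are below $2$. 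That is a cleaner unsolvability test than the explicit count the paper performs.

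The gap is in the reverse inequality, and you flag it yourself: you never carry out the step showing that the maximum of $|D|$ subject to both weight constraints is attained centrally, and in the odd-cycle case you explicitly defer the integrality refinement as ``unavoidable.'' It is avoidable, because you have posed a harder problem than the proposition requires. You do not need to prove that the largest $v$-unsolvable distribution on $C_m$ is central---that amounts to re-deriving the Pachter--Snevily--Voxman formula for $\pi$ of an odd cycle. You only need to \emph{exhibit} one centrally supported unsolvable distribution of size $\pi(C_m)-1$, since $\pi(C_m)-1$ already upper-bounds every unsolvable distribution by definition. With your own weight test this is a short computation: for $n=2k+1$ put $2^{k+i}-1$ pebbles on $x_{k+1}$ (weight toward each end is $(2^{k+i}-1)2^{-k}<2^i$); for $n=2k$ put $a=b=\lfloor 2^{k+i}/3\rfloor$ pebbles on $x_k$ and $x_{k+1}$, so the weight toward $x_1$ is $(2a+b)2^{-k}=3\lfloor 2^{k+i}/3\rfloor 2^{-k}\le(2^{k+i}-1)2^{-k}<2^i$ and symmetrically toward $x_n$, while $a+b=2\lfloor 2^{k+i}/3\rfloor=\pi(C_{2(k+i)-1})-1$ by the classical odd-cycle formula. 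This is precisely what the paper does, with a direct pebble count and a parity split in place of the weights. So the skeleton and the key lemma are right, but as written the proof stops short of a complete argument for $\pi(C_{n+2i-1})\le\rho(P_n,\mathcal{D}_{2^i})$, and the repair is to replace the global optimization by the explicit central construction.
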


\begin{proof}
For $i = 0$, we show $\rho(P_n,
\mathcal{D}_1) = \pi(C_{n-1})$.  Let the vertices of $C_{n-1}$ be $\{
y_1, y_2, \ldots, y_{n-1} \}$.  Given any distribution $D$ of pebbles
on $P_n$, let $D'$ be the distribution on $C_{n-1}$ given by
\[
D'(y_i) = \left\{ \begin{array}{ll}
D(x_1) + D(x_n) & \mbox{\ if\ } i = 1 \\
D(x_i) & \mbox{\ if\ } 2 \leq i \leq n-1
\end{array} \right.
\]
Then $y_1$ is reachable from $D'$ if and only if either $x_1$ or $x_n$
is reachable from $D$.  Thus, $\rho(P_n, \mathcal{D}_1) = \pi(C_{n-1})$.

To prove $\rho(P_n, \mathcal{D}_{2^i}) = \pi(C_{n+2i-1})$, let the vertices
of $C_{n+2i-1}$ be
\[
\{ z, a_{i-1}, \ldots, a_2, a_1, y_1, \ldots, y_n, b_1, b_2, \ldots,
b_{i-1} \}.
\]
Given a distribution $D$ of pebbles on $P_n$, let $D'$ be the
distribution on $C_{n+2i-1}$ given by $D'(y_i) = D(x_i)$ and $D'(v) =
0$ for every other vertex in $C_{n+2i-1}$.  Then $z$ is unreachable in
$C_{n+2i-1}$ if $2^i$ pebbles cannot be moved to either $x_1$ or $x_n$
in $P_n$.  Thus, $\rho(P_n, \mathcal{D}_{2^i}) \leq \pi(C_{n+2i-1})$.

To show that $\pi(C_{n+2i-1}) \leq \rho(P_n, \mathcal{D}_{2^i})$, we
construct a distribution of $\pi(C_{n+2i-1}) - 1$ pebbles in $P_n$ from
which $2^i$ pebbles cannot be moved to either $x_1$ or $x_n$.  We use
the critical distribution on $C_{n+2i-1}$.  Toward that end, if $n =
2k+1$, we have $\pi(C_{n+2i-1}) = 2^{k+i}$.  If we put $2^{k+i} - 1$
pebbles on $x_{k+1}$ we cannot move $2^i$ pebbles to either target.

On the other hand, if $n = 2k$, we analyze separately the cases when
$k+i$ is even or odd. 

\textbf{Case 1}: $k+i$ is even.  We let $m = \frac{k+i}{2}$.  Then
$n+2i = 2k + 2i = 4m$.  In this case, we have $\pi(C_{n+2i-1}) =
\pi(C_{4m-1}) = \frac{2^{2m+1} + 1}{3}$.  If we put $\frac{2^{2m} -
1}{3}$ pebbles each on $x_k$ and $x_{k+1}$, we have a total of
$\frac{2^{2m+1} - 2}{3} = \pi(C_{n+2i-1}) - 1$ pebbles on $P_n$.  Since
there are an odd number of pebbles on each vertex, one pebble cannot
be used in a pebbling move to the other vertex.  Therefore, at most
$\frac{2^{2m} - 4}{3}$ pebbles can be used, so we can transfer at most
$\frac{2^{2m-1} - 2}{3}$ pebbles from one occupied vertex to the
other.  Thus, we can put at most $\frac{2^{2m} - 1}{3} +
\frac{2^{2m-1} - 2}{3} = 2^{2m-1} - 1 = 2^{k+i} - 1$ pebbles on either
$x_k$ or $x_{k+1}$.  Thus, $2^i$ pebbles cannot be moved to either
$x_1$ or $x_n$. 

\textbf{Case 2}: $k+i$ is odd.  In this case, we let $m =
\frac{k+i-1}{2}$, so $n + 2i = 4m+2$.  Now $\pi(C_{n+2i-1}) =
\pi(C_{4m+1}) = \frac{2^{2m+2} - 1}{3}$.  If we put $\frac{2^{2m+1} -
2}{3}$ pebbles each on $x_k$ and $x_{k+1}$, we have $\pi(C_{4m+1}) - 1$
pebbles on $P_n$, and we can put at most $\frac{2^{2m+1} - 2}{3} +
\frac{2^{2m} - 1}{3} = 2^{2m} - 1 = 2^{k+i-1} - 1$ pebbles on either
$x_k$ or $x_{k+1}$.  Once again, $2^i$ pebbles cannot be moved to
either $x_1$ or $x_n$. 

Since we can always construct distributions of $\pi(C_{n+2i-1})$ pebbles
in $P_n$ from which no distribution of $\mathcal{D}_{2^i}$ is reachable,
we have $\pi(C_{n+2i-1}) \leq \rho(P_n, \mathcal{D}_{2^i})$.  Therefore,
$\pi(C_{n+2i-1}) = \rho(P_n, \mathcal{D}_{2^i})$, as desired.
\end{proof}

This gives rise to another counterexample to the analog of Graham's
conjecture.  If we let $T$ be the trivial graph with a single vertex
$v$ and let $\mathcal{S} = \{ 2 \delta_v \}$, it is natural to suppose
that a definition of multiplying distributions would give $\mathcal{S}
\cdot \mathcal{D}_1 = \mathcal{D}_2$.  Then, we would have
\[
\rho(T \times P_{4k+2}, \mathcal{S} \cdot \mathcal{D}_1) = \rho(P_{4k+2},
\mathcal{D}_2) = \pi(C_{4k+3}) = \frac{2^{2k+3} - (-1)^{2k+1}}{3} =
\frac{2^{2k+3} + 1}{3},
\]
and 
\[
\rho(T, \mathcal{S}) \rho(P_{4k+2}, \mathcal{D}_1) = 2 \pi(C_{4k+1}) = 2
\left(\frac{2^{2k+2} - (-1)^{2k}}{3} \right) = \frac{2^{2k+3} - 2}{3},
\]
but an analog of Graham's conjecture would require
\[
\rho(T \times P_{4k+2}, \mathcal{S} \cdot \mathcal{D}_1) \le
\rho(T, \mathcal{S}) \rho(P_{4k+2}, \mathcal{D}_1).
\]
contrary to what we would expect from an analog of Graham's
conjecture.

\bibliographystyle{plain}

\end{document}